\numberwithin{equation}{section}
\newtheorem{thm}{Theorem}[section]
\newtheorem{prop}[thm]{Proposition}
\newtheorem{cor}[thm]{Corollary}
\newtheorem{lem}[thm]{Lemma}
\theoremstyle{remark}
\newtheorem{rmk}[thm]{Remark}
\theoremstyle{definition}
\newtheorem{defn}{Definition}[section]
\DeclareMathOperator{\E}{\mathbb{E}}
\DeclareMathOperator{\N}{\mathbb{N}}
\DeclareMathOperator{\cG}{\mathcal{G}}
\DeclareMathOperator{\cL}{\mathcal{L}}
\DeclareMathOperator{\cH}{\mathcal{H}}
\newcommand{\pd}[2]{\frac{\partial #1}{\partial #2}}
\newcommand{\pdsup}[3]{\frac{\partial^{#3} #1}{\partial #2^{#3}}}
\newcommand{\der}[2]{\frac{d #1}{d #2}}
\newcommand{\Norm}[2]{\left\Vert #1 \right\Vert_{#2}}
\author{Giacomo Ascione$^\ast$}
\address{$^\ast$ Dipartimento di Matematica e Applicazioni ``Renato Caccioppoli'', Università degli Studi di Napoli Federico II, 80126 Napoli, Italy}
\author{Yuliya Mishura$^\dagger$}
\address{$^\dagger$ Department of Probability Theory, Statistics and Actuarial Mathematics,Taras Shevchenko National University of Kyiv, Volodymyrska 64, Kyiv 01601, Ukraine}
\author{Enrica Pirozzi$^\ast$}
\email{giacomo.ascione@unina.it \\
	myus@univ.kiev.ua \\
	enrica.pirozzi@unina.it}
\title{Time-Changed fractional Ornstein-Uhlenbeck process\footnote{This paper is now published (in revised form) in Fract. Calc. Appl. Anal. Vol. 23, No 2 (2020), pp. 450-483, DOI: 10.1515/fca-2020-0022, and is available online at http://www.degruyter.com/view/j/fca , so please always cite it with the journal's coordinates}}
\begin{document}
	\maketitle
	\begin{abstract}
		We define a time-changed fractional Ornstein-Uhlenbeck process by composing a fractional Ornstein-Uhlenbeck process with the inverse of a subordinator. Properties of the moments of such process are investigated and the existence of the density is shown. We also provide a generalized Fokker-Planck equation for the density of the process.
	\end{abstract}
	\keywords{Subordinator, generalized Caputo derivative, fractional Brownian motion, Time-changed process, generalized Fokker-Planck equation}

\section{Introduction}

The fractional Ornstein-Uhlenbeck (fOU) process is constructed as the solution of the stochastic differential equation (SDE) (\cite{cheridito2003fractional}), for $t\geq 0$ and $\theta>0$,
\begin{equation}\label{sde}
dU_t^H= -\frac{1}{\theta}U_t^Hdt+dB_t^H,
\end{equation}
where $B_t^H$ is the fractional Brownian motion (fBm) with Hurst parameter $H \in (0,1)$. This process is gaining an increasing attention due to its mathematical properties and stochastic features particularly suitable to model phenomena generated by processes with correlations. Indeed, the fOU process turns out to be useful to specialize models based on both OU-type processes and  fBm-type processes, because  it evolves according to the differential dynamics \eqref{sde}, typical for a classical OU process, and, at the same time, it preserves some stochastic  aspects of the fBm. 
More specifically, it can be viewed as a transformed fBm by the equation \eqref{sde}; in this sense, it is a process with a decay time $\theta$ towards the zero attractive level disrupted by a specialized noise, that is the fractional one $dB_t^H$ (\cite{Mis2008}). Nevertheless, the fOU preserves some properties of fBM: for instance, the long-range dependence is detectable in the asymptotic behavior of its covariance (\cite{mcap2019,cheridito2003fractional,kukush2017hypothesis}). Theoretical results about the standard OU process that are particularly useful for applications, have also been investigated for the fOU process providing a more general stochastic process and, in the same time, specializing and refining consolidated application models (see, e.g., \cite{dovidio2018}). 

Indeed, the behaviour of the covariance of the fBm turns out to be really useful to describe phenomena with memory. In the field of finance, for instance, models driven by the fBm are introduced to describe financial markets subject to memory effects (\cite{anh2005financial,gatheral2018volatility}). This application leads, for instance, to the study of fractional Cox-Ingersoll-Ross processes as square of fOU processes (\cite{mishura2017stochastic,mishura2018fractional}) and thus to further investigations of the first passage time of a fOU process through $0$. In physics, fBm models are used for instance to describe reaction-kinetics under subdiffusive dynamics (\cite{jeon2014first}), while in IT security, it is used to recognize Distributed Denial of Service attacks (\cite{li2008simulation}). In biology, in particular in the fields of the computational neuroscience, the ineffectiveness of the OU process to describe some neuronal dynamics with memory (\cite{shinomoto1999ornstein}) led to the definition of linear models of neuronal dynamics with different correlated noises (\cite{mbe2019,sakai1999temporally}), considering among them a fBm noise, thus leading to a fOU process (\cite{mcap2019}). All these applications are also supported by a growing theory on parameters' estimation and hypotheses testing on the fOU process (\cite{hu2010parameter,kukush2017hypothesis}) together with the study of the distribution of the maxima and the first passage time of the fBm (\cite{delorme2015maximum,jeon2014first,mishura2017stochastic}) and some advances in simulation of the fractional white noise (see e.g \cite{brouste2013parameter}).\par
On the other hand, this is not the only way to introduce strongly correlated processes. Indeed, in different contexts semi-Markov models are a growing up field. In finance, for instance, a semi-Markov extension of the Black-Scholes model can be adopted to describe option pricing (\cite{janssen2007semi}). In epidemiology, semi-Markov models are preferred to Markov ones to describe some infective dynamics, such as, for instance, AIDS (\cite{lefevre2016sir}). A common way to generate semi-Markov processes is considering a time-changed Markov process obtained from an additive Markov process (\cite{cinlar1974markov}). One of the simpler cases is given by a time-change made by using the inverse of a subordinator (see \cite{bertoin1999subordinators} for a more detailed description) which is independent from the starting Markov process. In the specific case of an $\alpha$-stable subordinator (\cite{meerschaert2013inverse}), this kind of construction is strictly linked to fractional calculus via a \textit{fractionalization} of the time-derivative in the backward Kolmogorov equation of the starting Markov process (\cite{meerschaert2011stochastic}). With these methods, for instance, fractional (in the sense of the time-change) Pearson diffusions are introduced and studied (\cite{leonenko2013fractional}) and in particular these processes have been shown to exhibit a long-range dependence (\cite{leonenko2013correlation}). An extension of this theory to general inverse subordinator has been made by introducing a generalized Caputo derivative as a particular pseudo-differential convolution operator (\cite{toaldo2015convolution}, for some other details see \cite{tenreiro2017}). By using this more general theory, one can construct general time-changed processes and study their generalized backward Kolmogorov equations, as done in \cite{gajda2015time}. The delay that is obtained via the time-change is quite useful to describe models in various field of research. In finance, fractional $M/M/1$ queues are seen to better adapt to some financial datasets (\cite{cahoy2015transient}). The same happens in population dynamics for fractional Yule processes (\cite{cahoy2014parameter}). In physics, such models are used to describe sub-diffusive behaviours of particles (\cite{weron2009anomalous}). Finally, in computational neuroscience, such a time-changed OU process could lead to spike trains that better fit some experimental data (\cite{ascione2017exit,ascione2019}).\par
Lately, there is a growing interest on time-changed non-Markov processes, in particular on the time-changed fBm. For instance, in \cite{gajda2014fokker} generalized Fokker--Planck equations for the time-changed fBm are studied while in \cite{mij2014} the correlation structure of such processes is exploited. A particular kind of time-changed fBm is also studied in \cite{kumar2019}. An important factor of such time-changed non-Markov processes is the interplay between the two different kind of memory, as one can observe in \cite{mij2014}. This interplay makes time-changed fBm-driven processes interesting tools for applications. Indeed, this kind of models have already shown their importance in finance, to generalize Black-Scholes models for option pricing (\cite{gu2012time,guo2014pricing}).
In this paper, we consider a fOU process time-changed by the inverse of a subordinator with Laplace exponent $\Psi$ independent from the starting fOU process: we denote with $U_H^{\Psi}(t)$ this time-changed process. Our aim is to study some properties of this process, such as the existence of its moments, their asymptotic behaviour, and the existence of the density. Such kind of process could be interesting for neuronal models (being both approaches useful to describe neuronal models, ad done in \cite{mcap2019} for the fOU and in \cite{ascione2019} for the time-changed OU), despite the difficulties related to the study of first passage times of fOU processes through fixed barriers. However, we focus on the case $H \in (1/2,1)$, to avoid some compliance in the representation of the variance. We aim to study the case $H \in (0,1/2)$ in future works. Moreover, we aim to introduce a generalized Fokker-Planck equation that is solved by its density. The problem is quite difficult because of the non-homogeneity in time of the diffusion coefficient of the original Fokker-Planck equation for the fOU. These difficulties are similar to the one encountered for the fBm in \cite{gajda2014fokker} and for a general Gaussian process in \cite{hah2011}. Here we need to focus on some additional properties of the variance of the fOU and then we need to introduce some ad-hoc operators in order to obtain the generalized Fokker-Planck equation (for generalized Fokker-Planck equation see also \cite{butko2018}).\par
The structure of the paper is the following:
\begin{itemize}
	\item In Section \ref{sec2} we introduce the basic notation and preliminaries on the fBm and the subordinators, then we define the time-changed fOU $U^\Psi_H$;
	\item In Section \ref{sec3} we show that the absolute moments of $U^\Psi_H$ are bounded and then we show  monotonicity and we exhibit the limit of such moments;
	\item In Section \ref{sec4} we use the characteristic function to show that the variables $U^\Psi_H(t)$ admit density for any $t>0$ under some assumption on the inverse subordinator;
	\item In Section \ref{sec5} we provide further properties of the variance function of a fOU without time-change, concerning in particular its Laplace transform and the behaviour of its first derivative;
	\item In Section \ref{sec6}, we introduce two operators that will be involved in the generalized Fokker-Planck equation proposed for the density of $U^\Psi_H$. In particular we show that the density of the fOU belongs to the domain of the first operator while the Lapalce transform of the density of the time-changed fOU belongs to the domain of the second one; in particular, by exploiting the relation between the Laplace transforms of the two densities, we are able to exploit a relation between the two operators; 
	\item Finally, Section \ref{gFP} is devoted to the introduction and the study of the generalized Fokker-Planck equation. In particular we prove that the density of the time-changed fOU is a \textit{mild solution} (in a sense that will be explained later) of such equation. Under additional hypothesis, we are also able to prove that such density is also a classical solution of the generalized Fokker-Planck equation. In Subsection~\ref{revisited} we re-consider the problem to find classical solutions of the generalized Fokker-Planck equation under less restrictive hypotheses, re-formulating it by using a different operator. Finally, we give some hypotheses on the Laplace exponent of the inverse subordinator under which the generalized Fokker-Planck equation can be rewritten as an integral equation.
\end{itemize}

\par
\par

\section{Definition of the Time-Changed Fractional Ornisten-Uhlenbeck process}\label{sec2}

Let $(\Omega, \mathcal{F}, P)$ be a complete probability space supporting all stochastic processes that will be considered below. Let us fix Hurst index $H \in \left(\frac{1}{2},1\right)$  and consider a fractional Brownian motion $B^H=\{B^H(t), t\ge 0\}$ with Hurst index $H$, that is, a Gaussian process with zero mean and covariance function $$\E[B^H(t)B^H(s)]=1/2(t^{2H}+s^{2H}-|t-s|^{2H}), s,t \in \mathbf{R}^+.$$
Let us also fix some number $\theta>0$ and introduce  the fOU process, starting from zero at zero (\cite{cheridito2003fractional}) as
\begin{equation*}
U_H(t)=e^{-\frac{t}{\theta}}\int_0^{t}e^{\frac{s}{\theta}}dB^H(s), t\ge 0.
\end{equation*}
Let us denote by   $\sigma=\{\sigma(y), y\ge 0\}$ a driftless subordinator with L\'{e}vy measure $\nu$ (\cite{bertoin1996Levy}). The L\'{e}vy measure $\nu$ is such that $\nu(-\infty,0)=0$ and fulfills the integrability condition
\begin{equation}\label{intest}
\int_0^{+\infty}(1 \wedge x)\nu(dx)<+\infty,
\end{equation}
and we have that $\E[e^{-\lambda \sigma(y)}]=e^{-y\Psi(\lambda)}$ with Laplace exponent $\Psi(\lambda)=\int_0^{+\infty}(1-e^{-\lambda x})\nu(dx)$, that is a Bernstein function (\cite{schilling2012bernstein}). Recall that Bernstein functions are invertible and belong to $C^1(0,+\infty)$ with completely monotone derivative. Moreover, they admit a unique extension to $\mathbf{H}:=\{\lambda \in \mathbf{C}: \ \Re(\lambda)\ge 0\}$ that is holomorphic in $\mathbf{H}^*:=\{\lambda \in \mathbf{C}: \ \Re(\lambda)>0\}$ (see \cite[Proposition $3.5$]{schilling2012bernstein}).  Let us also denote by $\cL$ the Laplace transform operator acting on the variable $t \in [0,+\infty)$ and by $\cL^{-1}$ its inverse. Let us suppose that $\nu(0,+\infty)=+\infty$. This is enough to ensure that  the process $\sigma(y)$ is strictly increasing (see \cite[Proposition $1.3$]{bertoin1999subordinators}).\par
Given a subordinator $\sigma(y)$, we can define the inverse subordinator $E=\{E(t), t\ge0\}$ as $E(t)=\inf\{y>0: \ \sigma(y)>t\}$.
Moreover, from $\nu(0,+\infty)=+\infty$ we know that $E(t)$ admits a probability density function $f_E(t,y)$ for any $t>0$.  \par
For the probability density function $f_E(t,y)$ it is well known the following Laplace transform formula (see \cite[Equation $3.13$]{meerschaert2008triangular}):
\begin{equation}\label{eq:Laptrans}
\cL[f_E(\cdot,y)](\lambda)=\frac{\Psi(\lambda)}{\lambda}e^{-y\Psi(\lambda)}.
\end{equation}
Finally,  let us consider a fOU process $U_H$ and an inverse subordinator $E$, independent from $U_H$. Then we define the time-changed fOU process as $U_H^\Psi(t):=U_H(E(t)), t\ge 0$.
%%%%%%%%%%%%%%%%%%%%%%%%%%%%%%%%%%%%%%%%%%%%%%%%%%%%%%%%%%%%%%%%%%%%%%%%%%%%%%%%%%%%%%%%%%%%%%%%%%%%%%%%%%
\section{Absolute moments of the time-changed fOU process and their asymptotic behavior}\label{sec3}

Let us denote
\begin{equation*}
V_{n,H}(t)=\E[|U_H(t)|^n], n\in \mathbb{N}, \quad \mbox{and} \quad V^\Psi_{n,H}(t)=\E[|U_H^\Psi(t)|^n], n\in \mathbb{N}.
\end{equation*}
Recall, in particular, that
\begin{equation}\begin{gathered}\label{variance}
V_{2,H}(t)=
H(2H-1)\theta^{2H}\int_0^{\frac{t}{\theta}}\int_0^{\frac{t}{\theta}}e^{-s-u}  |u-s|^{2H-2}duds.
\end{gathered}\end{equation}
Since $U_H(t)$ is a Gaussian process, we can immediately present the higher moments of the even order:
\begin{align*}\begin{split}
V_{2n,H}(t)&=\frac{(2H \theta^{2H} (2H-1))^n\Gamma\left(\frac{2n+1}{2}\right)}{\sqrt{\pi}}\left(\int_0^{\frac{t}{\theta}}\int_0^{\frac{t}{\theta}}e^{-s-u}  |u-s|^{2H-2}duds\right)^n.
\end{split}\end{align*}
Returning to the variance, we see with evidence  that $t \mapsto V_{2,H}(t)$ is a continuous   strictly increasing in $t$ function with the limit value
\begin{equation*}
V_{2,H}(\infty)=\lim_{t \to +\infty}V_{2,H}(t)=\theta^{2H}H \Gamma(2H),
\end{equation*}
(see \cite{kukush2017hypothesis}). Hence, in particular, $V_{2,H}(t)$ is bounded by $V_{2,H}(\infty)$, and consequently    $V_{2n,H}(t)$ is   bounded by $V_{2n,H}(\infty):=\frac{2^n\Gamma\left(\frac{2n+1}{2}\right)}{\sqrt{\pi}}(V_{2,H}(\infty))^n$.

Now, let us establish some properties of the moments of time-changed fOU process.  In what follows, we shall use the   notation  $\cL[{V}_{2n,H}(\cdot)](\lambda)$, $\cL[{V}^{\Psi}_{2n,H}(\cdot)](\lambda)$ and $\cL[{f}_E(\cdot,y)](\lambda)$ for  the Laplace transform of $V_{2n,H}(\cdot)$, $V_{2n,H}^\Psi(\cdot)$ and $ f_{E}(\cdot,y)$, respectively.
\begin{lem} 1) If the density $f_E(t,y)$ of the inverse subordinator exists, then
	\begin{equation*}
	V_{2n,H}^\Psi(t)=\int_0^{+\infty}V_{2n,H}(y)f_E(t,y)dy\le V_{2n,H}(\infty),
	\end{equation*}
	which means that  the absolute moments of $U_H^\Psi$ are bounded, too. If the density $f_E(t,y)$ of the inverse subordinator is a continuous function in $t$, then $V_{2n,H}^\Psi(t)$ is continuous in $t$ as well.
	
	2) The moments $V_{2n,H}^\Psi(t)$ are increasing in $t$.
	
	3) For any $n \in \N$ we have that
	\begin{equation}\begin{gathered}\label{multiple}
	\lim_{t \to +\infty}V_{2n,H}^\Psi(t)=  H^n(2H-1)^n \Gamma(2nH+1) \theta^{2nH}\\\times \int_{[0,{+\infty})^{2n}} \frac{\prod_{i=1}^n|x_i-y_i|^{2H-2}}{\left(1+\sum_{i=1}^{n}x_i+\sum_{i=1}^{n}y_i\right)^{2nH+1}}
	\prod_{i=1}^{n}dx_idy_i=V_{2n,H}(\infty),
	\end{gathered}\end{equation}
	and the multiple integral, contained in $V_{2n,H}(\infty)$, is well defined.
\end{lem}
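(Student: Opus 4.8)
The plan is to take the three parts in order, each time using that $U_H$ is independent of $E$ together with the monotonicity, boundedness and continuity of the deterministic function $V_{2n,H}$ recorded just above the statement. \emph{Part 1.} Conditioning on $E$ and invoking independence (``freezing'' $E(t)$), $\E[|U_H^\Psi(t)|^{2n}\mid E]=V_{2n,H}(E(t))$ a.s., so $V_{2n,H}^\Psi(t)=\E[V_{2n,H}(E(t))]=\int_0^{+\infty}V_{2n,H}(y)f_E(t,y)\,dy$, the last equality being just that $f_E(t,\cdot)$ is the density of $E(t)$. Since $0\le V_{2n,H}(y)\le V_{2n,H}(\infty)$ for all $y$ and $f_E(t,\cdot)$ integrates to $1$, the bound $V_{2n,H}^\Psi(t)\le V_{2n,H}(\infty)$ is immediate. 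For continuity in $t$ I would use Scheff\'e's lemma: if $t_k\to t$ then $f_E(t_k,\cdot)\to f_E(t,\cdot)$ pointwise by hypothesis, and since all these are probability densities, $\int_0^{+\infty}|f_E(t_k,y)-f_E(t,y)|\,dy\to 0$; boundedness of $V_{2n,H}$ then gives $|V_{2n,H}^\Psi(t_k)-V_{2n,H}^\Psi(t)|\le V_{2n,H}(\infty)\int_0^{+\infty}|f_E(t_k,y)-f_E(t,y)|\,dy\to 0$.

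\emph{Part 2.} For each $\omega$ the path $t\mapsto E(t,\omega)$ is non-decreasing (it is a generalized inverse of the non-decreasing $\sigma(\cdot,\omega)$), so $t\mapsto V_{2n,H}(E(t,\omega))$ is non-decreasing for every $\omega$ because $V_{2n,H}$ is non-decreasing; taking expectations in the identity of Part 1 gives the monotonicity of $V_{2n,H}^\Psi$. \emph{Part 3.} Parts 1 and 2 already yield existence of $\lim_{t\to+\infty}V_{2n,H}^\Psi(t)$. To identify it, note that $E(t)\to+\infty$ a.s.: for any $M>0$ and any $t>\sigma(M)$ one has $E(t)\ge M$, and $\sigma(M)<+\infty$ a.s. Hence $V_{2n,H}(E(t))\to V_{2n,H}(\infty)$ a.s. by continuity, and bounded convergence with the constant majorant $V_{2n,H}(\infty)$ gives $\lim_{t\to+\infty}V_{2n,H}^\Psi(t)=V_{2n,H}(\infty)$.

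It remains to recast $V_{2n,H}(\infty)$ in the form \eqref{multiple}. Put $g:=\int_0^{+\infty}\int_0^{+\infty}e^{-s-u}|u-s|^{2H-2}\,du\,ds$, so that the formula for $V_{2n,H}$ gives $V_{2n,H}(\infty)=c_{n,H}\,g^{n}$ with $c_{n,H}$ the explicit constant there. A change of variables $w=u+s$, $v=u-s$ shows $g=\int_0^{+\infty}v^{2H-2}e^{-v}\,dv=\Gamma(2H-1)<+\infty$, which is where $H>1/2$ enters (local integrability of $|u-s|^{2H-2}$). By Tonelli, $g^{n}=\int_{[0,+\infty)^{2n}}e^{-\sum_i(s_i+u_i)}\prod_i|s_i-u_i|^{2H-2}\prod_i ds_i\,du_i$. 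On the other hand, inserting $A^{-(2nH+1)}=\Gamma(2nH+1)^{-1}\int_0^{+\infty}r^{2nH}e^{-rA}\,dr$ with $A=1+\sum_i x_i+\sum_i y_i$ into the multiple integral of \eqref{multiple}, interchanging integrals (Tonelli; all integrands non-negative), and rescaling $x_i=s_i/r$, $y_i=u_i/r$, the $r$-dependence collapses — the Jacobian $r^{-2n}$, the homogeneity factor $r^{-n(2H-2)}$ of the numerator and the weight $r^{2nH}$ combine to $r^{0}$ — and one gets
\[
\int_{[0,+\infty)^{2n}}\frac{\prod_{i=1}^n|x_i-y_i|^{2H-2}}{\bigl(1+\sum_{i=1}^n x_i+\sum_{i=1}^n y_i\bigr)^{2nH+1}}\prod_{i=1}^n dx_i\,dy_i=\frac{g^{n}}{\Gamma(2nH+1)}.
\]
Combined with $V_{2n,H}(\infty)=c_{n,H}\,g^{n}$ (and the identity $\Gamma(\tfrac{2n+1}{2})=\tfrac{(2n)!}{4^nn!}\sqrt\pi$ that converts the prefactor of \eqref{multiple} into $c_{n,H}/\Gamma(2nH+1)$) this yields \eqref{multiple}, the finiteness of the multiple integral being then automatic.

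The step I expect to demand real care is this last change of variables and the Fubini--Tonelli bookkeeping: since all integrands are non-negative no integrability side conditions beyond $H>1/2$ arise, but one must keep the powers of $r$ straight. Parts 1 and 2 are essentially immediate once the conditioning identity $V_{2n,H}^\Psi(t)=\E[V_{2n,H}(E(t))]$ is available, and the only mild subtlety — the lack of an obvious $t$-uniform integrable majorant in Part 1 — is exactly what Scheff\'e's lemma circumvents. An alternative route to the limit in Part 3 is via Laplace transforms: using $\cL[f_E(\cdot,y)](\lambda)=\frac{\Psi(\lambda)}{\lambda}e^{-y\Psi(\lambda)}$ one obtains $\cL[V_{2n,H}^\Psi(\cdot)](\lambda)=\frac{\Psi(\lambda)}{\lambda}\int_0^{+\infty}V_{2n,H}(y)e^{-y\Psi(\lambda)}\,dy$ and then lets $\lambda\downarrow0$ through an Abelian/Tauberian theorem; the almost-sure argument above is shorter.
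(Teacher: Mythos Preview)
Your argument is correct on the substantive points and takes a genuinely different, more probabilistic route than the paper. For Part~2 the paper integrates $V_{2n,H}(y)-V_{2n,H}(x)$ against the joint law of $(E(s),E(t))$ on $\{x\le y\}$; your pathwise argument ($E(\cdot,\omega)$ non-decreasing, $V_{2n,H}$ non-decreasing, take expectations) is shorter. For Part~3 the paper works through Laplace transforms: it computes $\cL[V_{2n,H}^\Psi](\lambda)=\tfrac{\Psi(\lambda)}{\lambda}\cL[V_{2n,H}](\Psi(\lambda))$, derives $\cL[V_{2n,H}](\lambda)$ explicitly (the integral in \eqref{multiple} arises as the limit of the same integral over $[0,\tfrac{1}{\theta\lambda}]^{2n}$), reads off the $\lambda\to0$ asymptotics, and then applies a Tauberian theorem plus a l'Hospital step to pass from $\int_0^t V_{2n,H}^\Psi$ to $V_{2n,H}^\Psi$. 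Your route --- $E(t)\to+\infty$ a.s., bounded convergence --- bypasses all of that. The paper's approach has the advantage that the integral form in \eqref{multiple} and its finiteness (via a Hardy--Littlewood/fBm-moment bound) drop out of the Laplace computation; you instead verify the representation directly via the Gamma identity $A^{-(2nH+1)}=\Gamma(2nH+1)^{-1}\int_0^\infty r^{2nH}e^{-rA}\,dr$ and a scaling, which is more explicit.

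One genuine slip: your closing parenthetical that the identity $\Gamma(\tfrac{2n+1}{2})=\tfrac{(2n)!}{4^n n!}\sqrt\pi$ ``converts the prefactor of \eqref{multiple} into $c_{n,H}/\Gamma(2nH+1)$'' does not check out. Your own computation gives the multiple integral as $g^n/\Gamma(2nH+1)$, so the right side of \eqref{multiple} equals $H^n(2H-1)^n\theta^{2nH}g^n$, whereas $V_{2n,H}(\infty)=\tfrac{2^n\Gamma(\frac{2n+1}{2})}{\sqrt\pi}\,H^n(2H-1)^n\theta^{2nH}g^n=(2n-1)!!\,H^n(2H-1)^n\theta^{2nH}g^n$; these differ by $(2n-1)!!$ for $n\ge2$. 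This is not a defect of your method: the paper's own derivation writes $A_n=H^n(2H-1)^n$, silently omitting the Gaussian-moment factor $(2n-1)!!$, so the explicit constant displayed in \eqref{multiple} appears to be missing that factor as well. The limit identification $\lim_{t\to\infty}V_{2n,H}^\Psi(t)=V_{2n,H}(\infty)$ is of course unaffected.
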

\begin{proof} Statement 1) is evident. In order to prove statement 2), consider $0\le s\le t$  and, for any Borel set $A \subseteq \mathbf{R}^2$, define the measure $\cH(s,t,A)=\mathbb{P}((E(s),E(t)) \in A)$. Since $0 \le E(s) \le E(t)$, then, defining $D=\{(x,y) \in \mathbf{R}^2: \ 0 \le x \le y\}$, we have that for any Borel set $A \subseteq \mathbf{R}^2$ it holds $\cH(s,t,A)=\cH(s,t,A \cap D)$ (in particular the measure $\cH(s,t,\cdot)$ is concentrated on $D$). Therefore $$V_{2n,H}^\Psi(t)-V_{2n,H}^\Psi(s)=\int_D(V_{2n,H}(y)-V_{2n,H}(x)) \cH(s,t,dxdy)\ge 0$$
	because $V_{2n,H}(y)-V_{2n,H}(x)\ge 0$ for any $(x,y) \in D$. \par
	Consider statement 3). In terms of Laplace transform,  we have from \eqref{eq:Laptrans} that
	\begin{equation}\label{eq:VPsi}
	\cL[{V}_{2n,H}^\Psi(\cdot)](\lambda)=\int_{0}^{+\infty}\frac{\Psi(\lambda)}{\lambda}V_{2n,H}(y)e^{-y\Psi(\lambda)}dy=
	\frac{\Psi(\lambda)}{\lambda} \cL[{V}_{2n,H}(\cdot)](\Psi(\lambda)).
	\end{equation}
	Now we need to determine $\cL[{V}_{2n,H}(\cdot)](\lambda)$. To do this, observe first that, with a change of variable,
	\begin{equation*}
	V_{2,H}(y)=H(2H-1)y^{2H}\int_0^1\int_0^1e^{-\frac{y}{\theta}(z+v)}|z-v|^{2H-2}dzdv.
	\end{equation*}
	Now, with the notation $A_n=H^n(2H-1)^n$ and $B_n=A_n\Gamma(2nH+1)$,  we have
	{\small\begin{equation*}\begin{gathered}
	\cL[{V}_{2n,H}(\cdot)](\lambda) =A_n \int_0^{+\infty}\left(y^{2H}\int_0^1\int_0^1  e^{-\frac{y}{\theta}(z+v)}|z-v|^{2H-2}dzdv\right)^ne^{-\lambda y}dy\\
	=A_n\int_{[0,1]^{2n}} \prod_{i=1}^{n}|z_i-v_i|^{2H-2}\int_0^{+\infty}e^{- \left (\frac{1}{\theta} \left(\sum_{i=1}^{n}z_i+\sum_{i=1}^{n}v_i\right)+\lambda\right)y}y^{2Hn}dy\prod_{i=1}^{n}dz_idv_i\\
	=B_n\lambda^{-1}\theta^{2nH}\int_{[0,\frac{1}{\theta \lambda}]^{2n}} \frac{\prod_{i=1}^n|x_i-y_i|^{2H-2}}{\left(1+\sum_{i=1}^{n}x_i+\sum_{i=1}^{n}y_i\right)^{2nH+1}}\prod_{i=1}^{n}dx_idy_i.
	\end{gathered}\end{equation*}}
	Now let us observe that for any $\beta>0$ and $n\in \mathbb{N}$
	\begin{multline*}
	\int_{[0,\beta]^{2n}} \frac{\prod_{i=1}^n|x_i-y_i|^{2H-2}}{\left(1+\sum_{i=1}^{n}x_i+\sum_{i=1}^{n}y_i\right)^{2nH+1}}\prod_{i=1}^{n}dx_idy_i\\
	\le \left(\int_0^\beta \int_0^\beta \frac{|x-y|^{2H-2}}{(1+x)^{H+\frac{1}{n}}(1+y)^{H+\frac{1}{n}}}dxdy\right)^n.
	\end{multline*}
	Concerning the integral $\int_0^\beta \int_0^\beta \frac{|x-y|^{2H-2}}{(1+x)^{H+\frac{1}{n}}(1+y)^{H+\frac{1}{n}}}dxdy$, we can directly apply to it the Hardy--Littlewood theorem, or   observe that, according to \cite[Theorem 1.9.1 and  Corollary 1.9.4]{Mis2008},  it holds that for a fractional Brownian motion $B^H$
	{\small\begin{equation*}\begin{gathered}
	\int_0^\beta \int_0^\beta \frac{|x-y|^{2H-2}}{(1+x)^{H+\frac{1}{ n}}(1+y)^{H+\frac{1}{ n}}}dx dy=\frac{1}{H(2H-1)}\E\left[\int_0^\beta \frac{1}{(1+x)^{ H+\frac{1}{n}}}dB^H(x)\right]^2\\ \le C(H)\left(\int_0^{\infty} \frac{ dx}{(1+x)^{1+\frac{1}{ Hn}}}\right)^{2H}\le C(H)(Hn)^{2H},
	\end{gathered}\end{equation*}}
	so, the multiple integral $V_{2n,H}^\Psi(\infty) $  in \eqref{multiple} is well defined.
	Furthermore,  we have   as $\lambda \to 0$:
	\begin{equation*}
	\cL[{V}_{2n,H}(\cdot)](\lambda)\simeq  V_{2n,H}(\infty) \lambda^{-1}.
	\end{equation*}
	and, from \eqref{eq:VPsi}, we also have
	\begin{equation*}
	\cL[{V}^\Psi_{2n,H}(\cdot)](\lambda)\simeq V_{2n,H}(\infty)\lambda^{-1}.
	\end{equation*}
	Thus, by Tauberian theorem for the Laplace transform, we have as $t \to +\infty$
	\begin{equation*}
	\int_0^{t} V_{2n,H}^\Psi(s)ds \simeq  V_{2n,H}(\infty)t.
	\end{equation*}
	Since, according to statement 2), $V_{2n,H}^\Psi(t)$ is increasing, the limit \linebreak $\lim_{t \to +\infty}V_{2n,H}^\Psi(t)$ is well defined.  Moreover, we can  use a modification  of the   l'Hospital's rule to the case when the integral $\int_0^{t} V_{2n,H}^\Psi(s)ds$ has a derivative $V_{2n,H}^\Psi(t)$ at all points except a countable set, and  get that  $$\lim_{t \to +\infty}\frac{\int_0^t V_{2n,H}^\Psi(s)ds}{t}=\lim_{t \to +\infty}V_{2n,H}^\Psi(t).$$ Therefore,
	\begin{equation*}
	\lim_{t \to +\infty}V_{2n,H}^\Psi(t)= V_{2n,H}(\infty).
	\end{equation*}
\end{proof}
\section{Existence of the density of $U_H^\Psi(t)$}\label{sec4}

Now we investigate the problem of the existence of probability density function of time-changed fractional Ornstein-Uhlenbeck process. Denote by   $p_H(t,x)$   probability density function of $U_H(t)$ and  by $p^\Psi_H(t,x)$ the probability density function of $U_H^\Psi(t)$, if this probability density function exists for all $t>0$.
\begin{prop}\label{prop1}
	Suppose that the density $f_E(t,y)$ of the inverse subordinator exists, and additionally, $\E[E^{-H}(t)]<+\infty$ for any $t>0$. Then the probability density function $p^\Psi_H(t,x)$ exists for all $t>0$ and satisfies the equation
	\begin{align*}
	p_H^\Psi(t,x)&=\int_0^{+\infty}p_H(y,x)f_E(t,y)dy.
	\end{align*}
\end{prop}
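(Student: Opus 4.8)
The plan is to go through the characteristic function of $U_H^\Psi(t)$, as announced in the section, and then invert it. Since $E$ is independent of $U_H$ and $E(t)$ has law $f_E(t,y)\,dy$ on $(0,+\infty)$, the independence (``freezing'') lemma gives, for every $\xi\in\mathbf{R}$,
\[
\E\!\left[e^{i\xi U_H^\Psi(t)}\right]=\int_0^{+\infty}\E\!\left[e^{i\xi U_H(y)}\right]f_E(t,y)\,dy=\int_0^{+\infty}e^{-\frac12\xi^2 V_{2,H}(y)}f_E(t,y)\,dy,
\]
where we used that $U_H(y)\sim\mathcal{N}(0,V_{2,H}(y))$. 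Once this characteristic function is shown to lie in $L^1(\mathbf{R})$, the Fourier inversion theorem automatically produces a bounded continuous density for $U_H^\Psi(t)$, and the stated formula will follow by interchanging two integrals.

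The core of the argument is thus the estimate $\E[e^{i\xi U_H^\Psi(t)}]\in L^1(\mathbf{R})$. By Tonelli,
\[
\int_{\mathbf{R}}\left|\E\!\left[e^{i\xi U_H^\Psi(t)}\right]\right|d\xi\le\int_0^{+\infty}f_E(t,y)\left(\int_{\mathbf{R}}e^{-\frac12\xi^2 V_{2,H}(y)}d\xi\right)dy=\sqrt{2\pi}\int_0^{+\infty}\frac{f_E(t,y)}{\sqrt{V_{2,H}(y)}}\,dy,
\]
so I need a lower bound on $V_{2,H}(y)$. For $y\in(0,1]$ I would use the representation $V_{2,H}(y)=H(2H-1)y^{2H}\int_0^1\int_0^1 e^{-\frac{y}{\theta}(z+v)}|z-v|^{2H-2}dz\,dv$ established in the proof of the Lemma, together with $e^{-\frac{y}{\theta}(z+v)}\ge e^{-2/\theta}$ and $\int_0^1\int_0^1|z-v|^{2H-2}dz\,dv=\tfrac{1}{H(2H-1)}$, to get $V_{2,H}(y)\ge e^{-2/\theta}y^{2H}$; for $y\ge1$, the monotonicity of $V_{2,H}$ gives $V_{2,H}(y)\ge V_{2,H}(1)>0$. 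Hence $V_{2,H}(y)^{-1/2}\le C(1+y^{-H})$ for all $y>0$, with $C=C(H,\theta)$, and therefore
\[
\int_{\mathbf{R}}\left|\E\!\left[e^{i\xi U_H^\Psi(t)}\right]\right|d\xi\le\sqrt{2\pi}\,C\left(1+\E\!\left[E(t)^{-H}\right]\right)<+\infty
\]
by hypothesis. This is exactly where — and the only place where — the assumption $\E[E^{-H}(t)]<+\infty$ enters.

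Finally, with $\E[e^{i\xi U_H^\Psi(t)}]\in L^1(\mathbf{R})$, Fourier inversion gives
\[
p_H^\Psi(t,x)=\frac{1}{2\pi}\int_{\mathbf{R}}e^{-i\xi x}\int_0^{+\infty}e^{-\frac12\xi^2 V_{2,H}(y)}f_E(t,y)\,dy\,d\xi,
\]
and the absolute integrability just proved licenses swapping the two integrals (Fubini); since $\frac{1}{2\pi}\int_{\mathbf{R}}e^{-i\xi x}e^{-\frac12\xi^2 V_{2,H}(y)}d\xi$ is precisely the $\mathcal{N}(0,V_{2,H}(y))$ density $p_H(y,x)$, one obtains $p_H^\Psi(t,x)=\int_0^{+\infty}p_H(y,x)f_E(t,y)\,dy$. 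I expect the only real obstacle to be the uniform control of $V_{2,H}(y)^{-1/2}$ as $y\to0^+$ in the middle step — equivalently, taming the blow-up of $p_H(y,\cdot)$ near $y=0$ — while the rest is bookkeeping with Tonelli and Fubini. (The characteristic-function route is not strictly indispensable: testing against bounded Borel functions and using Fubini, with $\int_{\mathbf{R}}p_H(y,x)\,dx=\int_0^{+\infty}f_E(t,y)\,dy=1$, already identifies $\int_0^{+\infty}p_H(y,x)f_E(t,y)\,dy$ as a density of $U_H^\Psi(t)$; but it has the bonus of exhibiting $p_H^\Psi(t,\cdot)$ as bounded and continuous.)
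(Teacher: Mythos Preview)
Your proof is correct and follows essentially the same route as the paper: characteristic function via conditioning, Tonelli to reduce $L^1$-membership to control of $\int f_E(t,y)V_{2,H}(y)^{-1/2}dy$, splitting at $y=1$ with monotonicity for $y\ge 1$ and a $y^{2H}$ lower bound for $y\le 1$, then Fourier inversion and Fubini. The only cosmetic difference is that you produce the explicit constant $e^{-2/\theta}$ in $V_{2,H}(y)\ge e^{-2/\theta}y^{2H}$, whereas the paper argues by continuity from the asymptotic $V_{2,H}(y)\simeq H(2H-1)y^{2H}$ as $y\to 0$.
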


\begin{proof}
	Let us observe that the characteristic function $\varphi_H(t,z)=\E[e^{i z U_H(t)}]$ of fOU $U_H$ is given by $\varphi_H(t,z)=e^{-\frac{z^2}{2}V_{2,H}(t)}$. Define $\varphi_H^\Psi(t,z)=\E[e^{i z U_H^\Psi(t)}]$. From the independence of $E(t)$ and $U_H(t)$ we have
	\begin{equation}\label{charact}
	\varphi_H^\Psi(t,z)=\int_0^{+\infty}\varphi_H(y,z)f_E(t,y)dy=\int_0^{+\infty}e^{-\frac{z^2}{2}V_{2,H}(y)}f_E(t,y)dy.
	\end{equation}
	
	We want to show that $z \mapsto \varphi_H^\Psi(t,z)$ is an $L_1(\mathbf{R})$--function for any $t\ge 0$. In order to do this, observe that we can formally apply Fubini theorem, taking into account that all the functions are non-negative, and get that
	\begin{equation}\begin{gathered}\label{forFubini}
	\int_{\mathbf{R}}\varphi_H^\Psi(t,z)dz=\int_{\mathbf{R}}\int_0^{+\infty}e^{-\frac{z^2}{2}V_{2,H}(y)}f_E(t,y)dydz\\=(2\pi)^{1/2}\int_0^{+\infty} f_E(t,y)\frac{1}{\sqrt{V_{2,H}(y)}}dy.
	\end{gathered}\end{equation}
	According to equality \eqref{variance} for $V_{2,H}(t)$, we have ${V_{2,H}(y)\simeq  H(2H-1)y^{2H}}$ as $y \to 0$.  Therefore,  $\frac{V_{2,H}(y)}{y^{2H}}\ge C_1(H)>0$ for $y \in [0,1]$. It means that
	\begin{equation*}
	\int_0^{1} f_E(t,y)\frac{1}{\sqrt{V_{2,H}(y)}}dy\le C_1(H)^{-\frac{1}{2}}\E[ E^{-H}(t)]<+\infty.
	\end{equation*}
	Moreover, since $V_{2,H}(y)$ is an increasing function with $V_{2,H}(1)>0$,  the following upper   bound  holds:
	\begin{equation*}
	\int_{1}^{+\infty} f_E(t,y)\frac{1}{\sqrt{V_{2,H}(y)}}dy \le (V_{2,H}(1))^{-1/2}<+\infty.
	\end{equation*}
	If to summarize, we get that for any $t>0$ the Fourier transform $z \mapsto \varphi_H^\Psi(t,z)\in L_1(\mathbf{R})$. A standard application of L\'{e}vy inversion theorem implies the existence of $p^\Psi_H(t,x)$.
	Let us return to the equalities \eqref{charact}. Taking them into account, together with the integrability of the characteristic function  $\varphi_H^\Psi(t,z)$,  and applying
	inverse Fourier transform,  we get the following equation for the density $p_H^\Psi$:
	{\small \begin{equation*}
	p_H^\Psi(t,x)=\frac{1}{2\pi}\int_{\mathbf{R}}e^{-izx}\varphi_H^\Psi(t,z)dz
	=\frac{1}{2\pi}\int_{\mathbf{R}}e^{-izx}\int_0^{+\infty}\varphi_H(y,z)f_E(t,y)dydz.
	\end{equation*}}
	Now, the relations \eqref{forFubini} and   the subsequent upper bounds imply that conditions of the theorem guarantee the possibility to apply  the Fubini theorem to get that \begin{align*}
	\frac{1}{2\pi}\int_{\mathbf{R}}e^{-izx}\int_0^{+\infty}\varphi_H(y,z)f_E(t,y)dydz=\int_0^{+\infty}p_H(y,x)f_E(t,y)dy,
	\end{align*}
	and the proof follows. \end{proof}
\begin{rmk}
	If $E(t)$ is an inverse $\alpha$-stable subordinator, then for any $t>0$
	\begin{equation*}
	f_E(t,y)=\frac{t}{\alpha}y^{-1-\frac{1}{\alpha}}g_\alpha(ty^{-\frac{1}{\alpha}}),
	\end{equation*}
	where $g_\alpha$ is the density of a one-sided $\alpha$-stable random variable $S_\alpha$. Then
	\begin{equation*}
	\E[E(t)^{-H}]=\frac{t}{\alpha}\int_0^{+\infty}y^{-H}y^{-1-\frac{1}{\alpha}}g_\alpha(ty^{-\frac{1}{\alpha}})dy.
	\end{equation*}
	With the change of variable $z=ty^{-\frac{1}{\alpha}}$ we have $dz=-\frac{1}{\alpha} ty^{-1-\frac{1}{\alpha}}dy$, and $y=\left(\frac{z}{t}\right)^{-\alpha}$, therefore
	\begin{equation*}
	\E[E^{-H}(t)]= t^{-\alpha H } \int_0^{+\infty}z^{H\alpha}g_\alpha(z)dz= t^{-\alpha H} \E[S_\alpha^{H\alpha}]<+\infty,
	\end{equation*}
	since $H\alpha<\alpha$, and $S_\alpha$ has any moment of positive order less than $\alpha$.
\end{rmk}
From the integral representation of the characteristic function $\varphi_H^\Psi$, we also have the following corollary.
\begin{cor}
	Fix $n \in \mathbf{N}$. If $\E[E^{-(n+1)H}(t)]<+\infty$ for any $t>0$, then the density $p_H^\Psi(t,x)$ is differentiable $n$ times.
\end{cor}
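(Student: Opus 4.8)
The plan is to obtain the differentiability by differentiating, $n$ times in $x$, the L\'evy inversion formula
\[
p_H^\Psi(t,x)=\frac{1}{2\pi}\int_{\mathbf{R}}e^{-izx}\varphi_H^\Psi(t,z)\,dz
\]
under the integral sign. By the standard theorem on differentiation of parameter integrals (dominated convergence), this is legitimate, and produces a function in $C^n(\mathbf{R})$, as soon as $z\mapsto z^k\varphi_H^\Psi(t,z)\in L^1(\mathbf{R})$ for each $k=0,1,\dots,n$. Since $|z|^k\le 1+|z|^n$ whenever $0\le k\le n$, and since $\varphi_H^\Psi(t,\cdot)\in L^1(\mathbf{R})$ is already known from Proposition~\ref{prop1}, it is enough to prove that $z\mapsto z^n\varphi_H^\Psi(t,z)\in L^1(\mathbf{R})$.

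To this end I would start from the representation \eqref{charact} of the characteristic function and apply Fubini's theorem (all the integrands being non-negative) to get
\[
\int_{\mathbf{R}}|z|^n\varphi_H^\Psi(t,z)\,dz=\int_0^{+\infty}f_E(t,y)\left(\int_{\mathbf{R}}|z|^n e^{-\frac{z^2}{2}V_{2,H}(y)}\,dz\right)dy.
\]
Computing the inner Gaussian integral gives $\int_{\mathbf{R}}|z|^n e^{-\frac{z^2}{2}\sigma^2}\,dz=C_n\,\sigma^{-(n+1)}$ with $C_n=2^{(n+1)/2}\Gamma\!\big(\tfrac{n+1}{2}\big)$, so the quantity above equals $C_n\int_0^{+\infty}f_E(t,y)\,V_{2,H}(y)^{-\frac{n+1}{2}}\,dy$. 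It then remains to bound this last integral, splitting it at $y=1$ exactly as in the proof of Proposition~\ref{prop1}: on $[0,1]$ the asymptotics $V_{2,H}(y)\simeq H(2H-1)y^{2H}$ as $y\to 0$ coming from \eqref{variance} yield a constant $C_1(H)>0$ with $V_{2,H}(y)\ge C_1(H)y^{2H}$ there, hence
\[
\int_0^1 f_E(t,y)\,V_{2,H}(y)^{-\frac{n+1}{2}}\,dy\le C_1(H)^{-\frac{n+1}{2}}\int_0^1 f_E(t,y)\,y^{-(n+1)H}\,dy\le C_1(H)^{-\frac{n+1}{2}}\,\E\!\big[E^{-(n+1)H}(t)\big]<+\infty
\]
by hypothesis; on $[1,+\infty)$ the monotonicity of $V_{2,H}$ gives $V_{2,H}(y)^{-\frac{n+1}{2}}\le V_{2,H}(1)^{-\frac{n+1}{2}}$ while $\int_1^{+\infty}f_E(t,y)\,dy\le 1$. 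This shows $z\mapsto z^n\varphi_H^\Psi(t,z)\in L^1(\mathbf{R})$, and differentiating under the integral sign then gives
\[
\frac{\partial^k}{\partial x^k}p_H^\Psi(t,x)=\frac{1}{2\pi}\int_{\mathbf{R}}(-iz)^k e^{-izx}\varphi_H^\Psi(t,z)\,dz,\qquad k=0,1,\dots,n,
\]
each being continuous in $x$ by the same domination.

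The only genuinely delicate point is the behaviour of the time-change near the origin: it is the concentration of $E(t)$ near $0$ — equivalently, the vanishing of $V_{2,H}(y)$ at the rate $y^{2H}$ — that makes $\int_0^1 f_E(t,y)V_{2,H}(y)^{-(n+1)/2}\,dy$ the critical term, and it is controlled precisely by the assumed moment condition $\E[E^{-(n+1)H}(t)]<+\infty$. Everything else is a routine application of Fubini's theorem and dominated convergence.
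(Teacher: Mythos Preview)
Your proof is correct and follows essentially the same route as the paper: reduce to showing $|z|^n\varphi_H^\Psi(t,z)\in L^1$, swap the order of integration by Fubini to obtain $C_n\int_0^{+\infty}f_E(t,y)V_{2,H}(y)^{-(n+1)/2}\,dy$, and split at $y=1$ using the asymptotics of $V_{2,H}$ near $0$ and its monotonicity on $[1,\infty)$. The paper simply invokes \cite[Theorem~9.2]{rudin2006real} for the differentiation step, whereas you spell out the dominated-convergence argument and the reduction from $k\le n$ to $k=n$ explicitly, but the substance is identical.
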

\begin{proof}
	By using \cite[Theorem $9.2$]{rudin2006real}, it is only necessary to show that the function $z^n\varphi_H^\Psi(t,z)$ is in $L^1(0,+\infty)$. As before, we can formally apply Fubini's theorem, since inte integrand functions are non-negative, obtaining
	\begin{equation*}
	\int_0^{+\infty}z^n\varphi_H^\Psi(t,z)dz=C_n\int_0^{+\infty}f_E(t,y)(V_{2,H}(y))^{-\frac{n+1}{2}}dy
	\end{equation*}
	where
	\begin{equation*}
	C_n=\int_0^{+\infty}z^ne^{-\frac{z^2}{2}}dz.
	\end{equation*}
	Since $V_{2,H}(y)$ is an increasing function with $V_{2,H}(1)>0$, we have the following upper bound
	\begin{equation*}
	\int_1^{+\infty}f_E(t,y)(V_{2,H}(y))^{-\frac{n+1}{2}}dy\le (V_{2,H}(1))^{-\frac{n+1}{2}}<+\infty.
	\end{equation*}
	Moreover, since we know that $V_{2,H}(y)\ge C_1(H)y^{2H}$ for $y \in [0,1]$, we have that
	\begin{equation*}
	\int_0^{1}f_E(t,y)(V_{2,H}(y))^{-\frac{n+1}{2}}dy\le C_1(H)^{-\frac{n+1}{2}} \E[E^{-(n+1)H}(t)]<+\infty
	\end{equation*}
	concluding the proof.
\end{proof}
\begin{rmk}
	This is not the case of an inverse $\alpha$-stable subordinator. Indeed, by using the same manipulations as we did before, we have
	\begin{equation*}
	\E[E^{-(n+1)H}(t)]=t^{-\alpha(n+1)H}\E[S_\alpha^{(n+1)H\alpha}]
	\end{equation*}
	that, being $H>1/2$, is finite if and only if $n=0$.
\end{rmk}
\section{Some further properties of the variance function $V_{2,H}(\cdot)$}\label{sec5}

In this section we want to exploit some further properties of the variance function $V_{2,H}$ of the fractional Ornstein-Uhlenbeck process. First of all, let us recall, as done in Section \ref{sec3}, that the variance $V_{2,H}(t)$ is bounded hence its Laplace transform is well defined for any $\lambda \in \mathbf{H}^*$. Moreover,  we have the following lemma. Recall that $\cL[{V}_{2,H}(\cdot)](\lambda)$ stands for the Laplace transform of $V_{2,H}(\cdot)$.
\begin{lem}
	For any  $\lambda \in \mathbf{H}^*$  the following formula holds
	\begin{equation}\label{eq:5.1}
	\cL[{V}_{2,H}(\cdot)](\lambda)=\frac{2H\theta^{2H}\Gamma(2H)}{\lambda(\theta\lambda+2)(\theta\lambda +1)^{2H-1}}.
	\end{equation}
\end{lem}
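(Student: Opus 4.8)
The plan is to reduce everything to an explicit evaluation of a double integral that has essentially already been produced in Section~\ref{sec3}. Specialising to $n=1$ the computation carried out in the proof of the Lemma of that section (Tonelli applied to the non‑negative integrand, followed by the elementary $\Gamma$‑integral) gives, for every real $\lambda>0$,
\[
\cL[{V}_{2,H}(\cdot)](\lambda)=H(2H-1)\Gamma(2H+1)\,\theta^{2H}\,\lambda^{-1}\!\!\int_{[0,\frac{1}{\theta\lambda}]^2}\!\frac{|x-y|^{2H-2}}{(1+x+y)^{2H+1}}\,dx\,dy .
\]
So it suffices to compute, for an arbitrary $\beta>0$,
\[
I(\beta):=\int_{[0,\beta]^2}\frac{|x-y|^{2H-2}}{(1+x+y)^{2H+1}}\,dx\,dy ,
\]
and then to set $\beta=\frac{1}{\theta\lambda}$.

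To evaluate $I(\beta)$ I would first use the symmetry $x\leftrightarrow y$ to reduce to the triangle $\{0\le y\le x\le\beta\}$ and then change to the sum/difference variables $p=x+y$, $q=x-y$ (Jacobian $\tfrac12$). Since $2H-1>0$, the inner integral in $q$ is elementary, and the triangle splits into the ranges $p\in[0,\beta]$ and $p\in[\beta,2\beta]$, which yields
\[
I(\beta)=\frac{1}{2H-1}\left[\int_0^\beta\frac{p^{2H-1}}{(1+p)^{2H+1}}\,dp+\int_\beta^{2\beta}\frac{(2\beta-p)^{2H-1}}{(1+p)^{2H+1}}\,dp\right].
\]
Each of the two one‑dimensional integrals collapses to a pure power integral after a Möbius substitution: $w=\frac{p}{1+p}$ in the first and, after the reflection $r=2\beta-p$, $u=\frac{r}{1+2\beta-r}$ in the second. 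Carrying these out produces
\[
I(\beta)=\frac{\beta^{2H}}{H(2H-1)(1+\beta)^{2H-1}(1+2\beta)} .
\]
Substituting $\beta=\frac{1}{\theta\lambda}$, all powers of $\theta\lambda$ cancel, and using $\Gamma(2H+1)=2H\Gamma(2H)$ one obtains \eqref{eq:5.1} for every real $\lambda>0$.

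It then remains to pass from $(0,+\infty)$ to all of $\mathbf{H}^*$, which I would do by analytic continuation on the connected open set $\mathbf{H}^*$: the left‑hand side is holomorphic there because $V_{2,H}$ is bounded (as recalled in Section~\ref{sec3}), so its Laplace integral converges absolutely and defines a holomorphic function for $\Re(\lambda)>0$; the right‑hand side is holomorphic there as well, since $\Re(\lambda)>0$ forces $\Re(\theta\lambda+1)>1$ and $\Re(\theta\lambda+2)>2$, so $\lambda(\theta\lambda+2)\neq0$ and $\theta\lambda+1$ remains in the open right half‑plane, where the principal branch of $z\mapsto z^{2H-1}$ is holomorphic and non‑vanishing. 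Two holomorphic functions on $\mathbf{H}^*$ that agree on $(0,+\infty)$ coincide, which gives the claim.

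The main obstacle is the explicit evaluation of $I(\beta)$, and in particular the second of the two single integrals: one has to describe carefully the image of the triangle under $(x,y)\mapsto(p,q)$ (the two sub‑ranges for $p$), and the factor $1+2\beta$ there — which is exactly what becomes the factor $\theta\lambda+2$ in \eqref{eq:5.1} — only appears if one uses the correct Möbius substitution to turn that integral into $\int u^{2H-1}\,du$. Everything else (the $q$‑integration, the algebraic cancellation of the $\theta\lambda$ powers, and the analytic‑continuation step) is routine.
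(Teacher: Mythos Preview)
Your argument is correct, but it takes a genuinely different route from the paper. The paper does not use the double-integral expression for $\cL[V_{2,H}]$ from Section~\ref{sec3} at all; instead it starts from the alternative \emph{one-dimensional} representation
\[
V_{2,H}(t)=H\left(\int_0^te^{-z/\theta}z^{2H-1}\,dz+e^{-2t/\theta}\int_0^{t}e^{z/\theta}z^{2H-1}\,dz\right)
\]
(quoted as equation~\eqref{eq:altvar}, from \cite{kukush2017hypothesis}). With this formula in hand, a single application of Fubini in $(t,z)$ and the change of variable $y=(\lambda+1/\theta)z$ give \eqref{eq:5.1} in two lines, the inner $t$-integrals being trivial exponentials and the remaining $z$-integral reducing to $\Gamma(2H)$.

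Your approach avoids quoting \eqref{eq:altvar} and is therefore self-contained, at the price of the explicit evaluation of $I(\beta)$ via the $(p,q)$ change of variables and the two M\"obius substitutions; those steps are correct, and the final closed form $I(\beta)=\dfrac{\beta^{2H}}{H(2H-1)(1+\beta)^{2H-1}(1+2\beta)}$ does the job. One minor advantage on your side is that you make the analytic-continuation step to all of $\mathbf{H}^*$ explicit, whereas the paper's change of variable $y=(\lambda+1/\theta)z$ tacitly assumes $\lambda$ real and leaves the extension implicit.
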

\begin{proof}
	
	To obtain   formula \eqref{eq:5.1} for $\cL[{V}_{2,H}(\cdot)](\lambda)$, let us recall the following alternative representation of $V_{2,H}(t)$:
	\begin{equation}\label{eq:altvar}
	V_{2,H}(t)=H\left(\int_0^te^{-\frac{z}{\theta}}z^{2H-1}dz+e^{-\frac{2}{\theta}t}\int_0^{t}e^{\frac{z}{\theta}}z^{2H-1}dz\right),
	\end{equation}
	as given in \cite{kukush2017hypothesis}. Thus we have, by using Fubini's theorem,
	\begin{align*}
	&\cL[{V}_{2,H}(\cdot)](\lambda)=\int_0^{+\infty}e^{-\lambda t}V_{2,H}(t)dt\\
	&\qquad =H\int_0^{+\infty}z^{2H-1}\left(e^{-\frac{z}{\theta}}\int_z^{+\infty}e^{-\lambda t}dt+e^{\frac{z}{\theta}}\int_z^{+\infty}e^{-\left(\lambda+\frac{2}{\theta}\right)t}dt\right)dz\\
	&\qquad=\frac{2H\theta^{2H}(\theta\lambda+1)}{\lambda(\theta\lambda+2)(\theta\lambda+1)^{2H}}\int_0^{+\infty}e^{-y}y^{2H-1}dy =\frac{2H\theta^{2H}\Gamma(2H)}{\lambda(\theta\lambda+2)(\theta\lambda+1)^{2H-1}},
	\end{align*}
	where we used the change of variable $y=\left(\lambda+\frac{1}{\theta}\right)z$.
\end{proof}
Moreover, let us give some information on the derivative of the variance $V_{2,H}(t)$.
\begin{lem}\label{lemma:asymVpr}
	The function $V_{2,H}\in C^1[0,+\infty)$ and its derivative  satisfies the following relations:
	\begin{itemize}
		\item[$(i)$] $\lim_{t \to +\infty} {e^{\frac{t}{\theta}}t^{2-2H}}{V'_{2,H}(t)}=2H(2H-1)\theta$; in particular, it means that $\lim_{t \to +\infty}V'_{2,H}(t)=0$.
		\item[$(ii)$] $\lim_{t \to 0}\frac{V'_{2,H}(t)}{t^{2H-1}}=2H$; in particular, it means that $\lim_{t \to 0}V'_{2,H}(t)=0$.
	\end{itemize}
\end{lem}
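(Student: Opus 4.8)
The plan is to differentiate the alternative representation \eqref{eq:altvar} of $V_{2,H}$ and analyse the resulting expression separately near $0$ and near $+\infty$. Write $V_{2,H}(t)=H\big(I_1(t)+e^{-2t/\theta}I_2(t)\big)$ with $I_1(t)=\int_0^t e^{-z/\theta}z^{2H-1}\,dz$ and $I_2(t)=\int_0^t e^{z/\theta}z^{2H-1}\,dz$. Since $H\in(1/2,1)$, the integrands $z\mapsto e^{\mp z/\theta}z^{2H-1}$ are continuous on $[0,+\infty)$ (they vanish at $z=0$ because $2H-1>0$), so $I_1,I_2\in C^1[0,+\infty)$ with $I_1'(t)=e^{-t/\theta}t^{2H-1}$ and $I_2'(t)=e^{t/\theta}t^{2H-1}$; hence $V_{2,H}\in C^1[0,+\infty)$ and
\begin{equation*}
V'_{2,H}(t)=2H\,e^{-t/\theta}\Big(t^{2H-1}-\frac{1}{\theta}e^{-t/\theta}I_2(t)\Big)=:2H\,e^{-t/\theta}g(t).
\end{equation*}
Both assertions will then follow from the behaviour of $g$ at the two endpoints.

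For $(ii)$ a crude bound suffices: from $e^{(z-t)/\theta}\le1$ for $z\in[0,t]$ one gets $e^{-t/\theta}I_2(t)=\int_0^t e^{(z-t)/\theta}z^{2H-1}\,dz\le \int_0^t z^{2H-1}\,dz=\frac{t^{2H}}{2H}$, so $0\le \frac{1}{\theta t^{2H-1}}e^{-t/\theta}I_2(t)\le \frac{t}{2H\theta}\to0$ as $t\to0$. Since also $e^{-t/\theta}\to1$, dividing $V'_{2,H}(t)$ by $t^{2H-1}$ yields $\lim_{t\to0}V'_{2,H}(t)/t^{2H-1}=2H$, and multiplying back by $t^{2H-1}\to0$ gives $\lim_{t\to0}V'_{2,H}(t)=0$.

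For $(i)$ the crude bound is too lossy, because as $t\to+\infty$ the leading term of $\frac{1}{\theta}e^{-t/\theta}I_2(t)$ is exactly $t^{2H-1}$ and cancels the other term of $g$; one therefore integrates by parts once, obtaining $I_2(t)=\theta e^{t/\theta}t^{2H-1}-\theta(2H-1)J(t)$ with $J(t)=\int_0^t e^{z/\theta}z^{2H-2}\,dz$ (the integral converges at $0$ since $2H-2>-1$). Substituting, $g(t)=(2H-1)e^{-t/\theta}J(t)$, whence
\begin{equation*}
e^{t/\theta}t^{2-2H}V'_{2,H}(t)=2H\,t^{2-2H}g(t)=2H(2H-1)\,\frac{J(t)}{e^{t/\theta}t^{2H-2}}.
\end{equation*}
Since $J(t)\to+\infty$ and $e^{t/\theta}t^{2H-2}\to+\infty$, l'Hospital's rule applies and the ratio of derivatives equals $\big(\frac1\theta+\frac{2H-2}{t}\big)^{-1}\to\theta$; this proves $(i)$, and dividing by $e^{t/\theta}t^{2-2H}\to+\infty$ gives $\lim_{t\to+\infty}V'_{2,H}(t)=0$. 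The only delicate point is this exact cancellation of the $t^{2H-1}$ term in $(i)$, which is why one integration by parts (and not the cruder estimate used for $(ii)$) is needed to expose the genuine order $t^{2H-2}$ of $g$ and the constant $2H(2H-1)\theta$; an alternative bookkeeping device is to note that $g$ solves the linear ODE $g'(t)+\frac1\theta g(t)=(2H-1)t^{2H-2}$ with $g(0^+)=0$, whose explicit solution $g(t)=(2H-1)e^{-t/\theta}\int_0^t e^{s/\theta}s^{2H-2}\,ds$ makes the same asymptotics transparent.
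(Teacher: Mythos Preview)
Your argument is correct and follows essentially the same route as the paper: differentiate the representation \eqref{eq:altvar}, integrate by parts once in $I_2$ to obtain the closed form $V'_{2,H}(t)=2H(2H-1)e^{-2t/\theta}\int_0^t e^{z/\theta}z^{2H-2}\,dz$ (this is exactly equation \eqref{deriv} in the paper), and then apply l'Hospital to the ratio $J(t)/\big(e^{t/\theta}t^{2H-2}\big)$ for $(i)$. The only noteworthy differences are cosmetic: for $(ii)$ you use the direct bound $e^{-t/\theta}I_2(t)\le t^{2H}/(2H)$ in place of the paper's second application of l'Hospital, and your justification of $C^1$ at $t=0$ (continuity of the integrands since $2H-1>0$) is slightly cleaner than the paper's separate computation of $V'_{2,H}(0)$ via $\lim_{t\to0}V_{2,H}(t)/t$.
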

\begin{proof}
	The fact that $V_{2,H}(t)$ is a $C^1$ function in $(0,+\infty)$ follows easily from equation \eqref{eq:altvar}. Indeed, differentiating this  equation and then integrating by parts, we get that or any $t>0$
	\begin{align}\label{deriv}
	\begin{split}
	V'_{2,H}(t)=2H(2H-1)e^{-\frac{2}{\theta}t}\int_0^te^{\frac{z}{\theta}}z^{2H-2}dz.
	\end{split}
	\end{align}
	The derivative at zero can be calculated using the L'Hospital's rule:
	$$V'_{2,H}(0)=\lim_{t \to 0}\frac{V_{2,H}(t)}{t}=2H\lim_{t \to 0}t^{2H-1}=0,$$
	and it follows from \eqref{deriv} that $\lim_{t\downarrow 0}V'_{2,H}(t)=0.$
	So, $V_{2,H}\in C^1[0,+\infty)$. Now, to obtain $(i)$, we  use again the L'Hospital's rule:
	\begin{align*}
	\lim_{t \to +\infty}\frac{V'_{2,H}(t)}{e^{-\frac{t}{\theta}}t^{2 H-2}}=\lim_{t \to +\infty}\frac{2H(2H-1)\theta}{1+2\theta(H-1)t^{-1}}=2H(2H-1)\theta.
	\end{align*}
	
	Further, to obtain $(ii)$, let us use the L'Hospital's rule once again and evaluate:
	\begin{align*}
	\lim_{t \to 0}\frac{V'_{2,H}(t)}{t^{2H-1}}=\lim_{t \to 0}\frac{2H(2H-1)  }{ e^{\frac{t}{\theta}}\left(\frac{2}{\theta}t+(2H-1)\right)}=2H.
	\end{align*}
\end{proof}
With such asymptotics, we can easily establish the following fact.
\begin{cor}
	$V'_{2,H}(t)$ is in $L^2(0,+\infty)$.
\end{cor}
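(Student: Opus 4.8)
The plan is to split the half-line $(0,+\infty)$ into a neighbourhood of the origin and a neighbourhood of infinity (with a compact piece in between), and to estimate $V'_{2,H}$ on each piece directly from Lemma~\ref{lemma:asymVpr}. The point is that near $0$ the derivative vanishes like a positive power of $t$, hence is bounded, while near $+\infty$ it decays exponentially; both regimes are comfortably square-integrable, and on any compact interval continuity does the rest.

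Concretely, I would first note that since $V_{2,H}\in C^1[0,+\infty)$ by Lemma~\ref{lemma:asymVpr}, the function $V'_{2,H}$ is continuous on $[0,1]$, hence bounded there, so $\int_0^1 (V'_{2,H}(t))^2\,dt<+\infty$. (Equivalently, the finer asymptotic $(ii)$ gives $(V'_{2,H}(t))^2\sim 4H^2 t^{4H-2}$ as $t\to 0$, and $4H-2>0$ because $H>\frac12$, so the integrand is locally integrable at the origin.) For the tail, I would invoke part $(i)$: since $\lim_{t\to+\infty} e^{t/\theta}t^{2-2H}V'_{2,H}(t)=2H(2H-1)\theta$, there exist $M>0$ and $T\ge 1$ with $|V'_{2,H}(t)|\le M e^{-t/\theta}t^{2H-2}$ for all $t\ge T$, whence
\begin{equation*}
\int_T^{+\infty}(V'_{2,H}(t))^2\,dt\le M^2\int_T^{+\infty}e^{-2t/\theta}t^{4H-4}\,dt<+\infty,
\end{equation*}
the last integral converging because the exponential factor dominates the polynomial $t^{4H-4}$. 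Finally, on the remaining compact interval $[1,T]$ continuity of $V'_{2,H}$ again yields boundedness and hence square-integrability. Adding the three contributions gives $V'_{2,H}\in L^2(0,+\infty)$.

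There is no genuine obstacle here: the whole corollary is a bookkeeping consequence of the two limits established in Lemma~\ref{lemma:asymVpr}. The only place where a hypothesis is used in an essential way is the behaviour at $0$, where one needs $2H-1>0$ (equivalently, the continuity of $V'_{2,H}$ up to $t=0$) so that there is no singularity to integrate; at infinity the exponential decay makes convergence automatic regardless of the power of $t$.
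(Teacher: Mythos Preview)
Your proof is correct and follows exactly the same strategy as the paper: use $V_{2,H}\in C^1[0,+\infty)$ for boundedness near the origin and on compact sets, and the asymptotic $V'_{2,H}(t)\simeq t^{2H-2}e^{-t/\theta}$ from Lemma~\ref{lemma:asymVpr}$(i)$ for square-integrability at infinity. The paper's proof is simply a one-line summary of the argument you spelled out in detail.
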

\begin{proof}
	Indeed, $V \in C^1([0,+\infty))$,  $\lim_{t \to 0}V'_{2,H}(t)=0$ and, as \linebreak ${t \to +\infty}$, we have that $V'_{2,H}(t)\simeq t^{2H-2}e^{-\frac{t}{\theta}}$.
\end{proof}
Moreover, we also have the following Laplace transform formula for $V'_{2,H}(t)$.
\begin{lem}
	$V'_{2,H}(\cdot)$ is Laplace transformable for any  $\lambda \in \mathbf{C}$ such that $\Re(\lambda)>-\frac{1}{\theta}$. In particular,
	\begin{equation}\label{eq:laptrVpr}
	\cL[V'_{2,H}(\cdot)](\lambda)=\frac{2H\theta^{2H}\Gamma(2H)}{(\theta\lambda+2)(\theta\lambda +1)^{2H-1}}.
	\end{equation}
	Moreover, $\cL[V'_{2,H}(\cdot)](\lambda)$ is  holomorphic  in  $\left\{\lambda \in \mathbf{C}: \Re(\lambda)>-\frac{1}{\theta}\right\}$  and for any $c \in \mathbf{R}$ such that  $c>-\frac{1}{\theta}$  the function $\omega \in \mathbf{R} \mapsto \cL[V'_{2,H}(\cdot)](c+i\omega)$ is in $L^1(\mathbf{R})\cap L^2(\mathbf{R})$.
\end{lem}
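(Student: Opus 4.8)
The plan is to start from the explicit formula \eqref{deriv} for $V'_{2,H}$, namely $V'_{2,H}(t)=2H(2H-1)e^{-2t/\theta}\int_0^t e^{z/\theta}z^{2H-2}\,dz$, and to compute the Laplace transform by a direct application of the Tonelli--Fubini theorem. First I would check absolute convergence of the relevant double integral on the strip $\{\lambda\in\mathbf{C}:\Re(\lambda)>-1/\theta\}$. Near $t=0$, part $(ii)$ of Lemma \ref{lemma:asymVpr} gives $V'_{2,H}(t)\simeq 2Ht^{2H-1}$, which is integrable since $2H-1>0$; as $t\to+\infty$, part $(i)$ gives $V'_{2,H}(t)\simeq 2H(2H-1)\theta\,t^{2H-2}e^{-t/\theta}$, so $e^{-\Re(\lambda)t}|V'_{2,H}(t)|\simeq C\,t^{2H-2}e^{-(\Re(\lambda)+1/\theta)t}$ is integrable at $+\infty$ precisely when $\Re(\lambda)+1/\theta>0$. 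This simultaneously shows that $V'_{2,H}$ is Laplace transformable on $\{\Re(\lambda)>-1/\theta\}$ and justifies Fubini on this strip.

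With the swap of integration order legitimized, I would write $\cL[V'_{2,H}(\cdot)](\lambda)=2H(2H-1)\int_0^{+\infty}e^{z/\theta}z^{2H-2}\big(\int_z^{+\infty}e^{-(\lambda+2/\theta)t}\,dt\big)dz$, evaluate the inner integral as $(\lambda+2/\theta)^{-1}e^{-(\lambda+2/\theta)z}$, and recognise what remains as a Gamma integral, $\int_0^{+\infty}z^{2H-2}e^{-(\lambda+1/\theta)z}\,dz=\Gamma(2H-1)(\lambda+1/\theta)^{-(2H-1)}$ (valid exactly for $\Re(\lambda)>-1/\theta$). Using $(2H-1)\Gamma(2H-1)=\Gamma(2H)$ together with $\lambda+2/\theta=(\theta\lambda+2)/\theta$ and $\lambda+1/\theta=(\theta\lambda+1)/\theta$ then gives formula \eqref{eq:laptrVpr}. (Alternatively, for $\Re(\lambda)>0$ one may deduce the same formula from $\cL[V'_{2,H}(\cdot)](\lambda)=\lambda\,\cL[V_{2,H}(\cdot)](\lambda)-V_{2,H}(0)=\lambda\,\cL[V_{2,H}(\cdot)](\lambda)$, since $V_{2,H}(0)=0$, combined with \eqref{eq:5.1}, and then extend to the whole strip by the identity theorem; but the direct computation already covers everything.)

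For holomorphy I would observe that the right-hand side of \eqref{eq:laptrVpr} is manifestly holomorphic on $\{\Re(\lambda)>-1/\theta\}$: there $\Re(\theta\lambda+1)>0$, so $\theta\lambda+1$ lies in the open right half-plane, where the principal branch of $(\cdot)^{2H-1}$ is holomorphic and non-vanishing, while $\Re(\theta\lambda+2)>1>0$, so the factor $\theta\lambda+2$ never vanishes either. Since $\cL[V'_{2,H}(\cdot)]$ equals this expression on the strip, it is holomorphic there. (This can also be seen directly by differentiation under the integral sign, the domination being locally uniform in $\lambda$ by the estimates above.)

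Finally, fix $c>-1/\theta$ and consider $\omega\mapsto\cL[V'_{2,H}(\cdot)](c+i\omega)$. From \eqref{eq:laptrVpr}, as $|\omega|\to+\infty$ one has $|\theta(c+i\omega)+2|\sim\theta|\omega|$ and $|\theta(c+i\omega)+1|^{2H-1}\sim(\theta|\omega|)^{2H-1}$, hence $|\cL[V'_{2,H}(\cdot)](c+i\omega)|\sim 2H\Gamma(2H)\,|\omega|^{-2H}$. Since $H>1/2$, the exponent $2H$ exceeds $1$, so this is integrable at infinity, and a fortiori $|\omega|^{-4H}$ is integrable at infinity; near every finite $\omega$ the function is continuous, because its denominators cannot vanish (indeed $\theta c+1>0$), hence locally bounded. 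Therefore $\omega\mapsto\cL[V'_{2,H}(\cdot)](c+i\omega)$ belongs to $L^1(\mathbf{R})\cap L^2(\mathbf{R})$. The only genuinely delicate point in the whole argument is the bookkeeping of the domain: one must keep both the branch point of $(\theta\lambda+1)^{2H-1}$ at $\lambda=-1/\theta$ and the pole at $\lambda=-2/\theta$ outside the half-plane $\{\Re(\lambda)>-1/\theta\}$, so that the Fubini computation, the holomorphy claim, and the continuity on vertical lines all hold on exactly the stated set.
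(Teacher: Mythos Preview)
Your argument is correct. The overall structure---asymptotics of $V'_{2,H}$ for integrability, explicit formula, holomorphy, and the $|\omega|^{-2H}$ decay on vertical lines---matches the paper's proof closely. The one substantive difference is in how you obtain formula~\eqref{eq:laptrVpr}: the paper first derives it on $\mathbf{H}^*$ from the relation $\cL[V'_{2,H}](\lambda)=\lambda\,\cL[V_{2,H}](\lambda)$ together with the already-computed transform~\eqref{eq:5.1}, and then extends to the full strip $\{\Re(\lambda)>-1/\theta\}$ by analyticity (citing a general result on holomorphy of Laplace transforms of $L^1_{\mathrm{loc}}$ functions); you instead compute the transform directly on the whole strip via Fubini applied to the representation~\eqref{deriv}, which is self-contained and avoids the two-step extension. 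You mention the paper's route as an alternative, so you are aware of both. For holomorphy the paper appeals to a reference while you read it off the explicit expression; either is fine. Your decay estimate $|\cL[V'_{2,H}](c+i\omega)|\sim C|\omega|^{-2H}$ is the correct exponent (the paper's displayed bound has $|\omega|^{2}$ in the denominator, which appears to be a typo for $|\omega|^{2H}$, though the conclusion is unaffected since $2H>1$).
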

\begin{proof}
	Consider $\lambda \in \mathbf{C}$ such that $\Re(\lambda)>-\frac{1}{\theta}$. We have
	\begin{equation*}
	\int_0^{+\infty} e^{-\lambda t}V_{2,H}'(t)dt=\int_0^1 e^{-\lambda t}V_{2,H}'(t)dt+\int_1^{+\infty}e^{-\lambda t}V_{2,H}'(t)dt
	\end{equation*}
	For $t \in [0,1]$, let us just observe that, since $V_{2,H}'(t)$ is continuous, there exists a constant $C_1$ such that
	\begin{equation*}
	|e^{-\lambda t}V_{2,H}'(t)|\le C_1e^{\frac{t}{\theta}}
	\end{equation*}
	that is in $L^1([0,1])$. For $t \in [1,+\infty)$, let us recall from Lemma \ref{lemma:asymVpr}, property $(i)$, that there exists a constant $C_2$ such that
	\begin{equation*}
	V_{2,H}'(t)\le C_2 t^{2H-2}e^{-\frac{t}{\theta}}
	\end{equation*}
	and then
	\begin{equation*}
	|e^{-\lambda t}V_{2,H}'(t)|\le C_2 t^{2H-2}e^{-\left(\Re(\lambda)+\frac{1}{\theta}\right)t}
	\end{equation*}
	where $t^{2H-2}e^{-\left(\Re(\lambda)+\frac{1}{\theta}\right)t}$ is in $L^1(1,+\infty)$ since $\Re(\lambda)+\frac{1}{\theta}>0$. Hence \linebreak $\cL[V'_{2,H}(\cdot)](\lambda)$ is well defined for any $\lambda \in \mathbf{C}$ such that $\Re(\lambda)>-\frac{1}{\theta}$. Moreover, since $V'_{2,H}(t)$ is continuous, it belongs to $L^1_{loc}(0,+\infty)$ and then, from \cite[Theorem $1.5.1$]{wolfgang2002vector}, we know that $\cL[V'_{2,H}(\cdot)](\lambda)$ is holomorphic in \linebreak $\left\{\lambda \in \mathbf{C}: \ \Re(\lambda)>-\frac{1}{\theta}\right\}$. Equation \eqref{eq:laptrVpr} for $\lambda \in \mathbf{H}^*$ follows from the relation
	\begin{equation*}
	\cL[V'_{2,H}(\cdot)](\lambda)=\lambda \cL[{V}_{2,H}(\cdot)](\lambda)
	\end{equation*}
	and the fact that it holds also for   $\Re(\lambda)>-\frac{1}{\theta}$ follows from the fact that $\cL[V'_{2,H}(\cdot)](\lambda)$ is analytic. Finally, consider $\omega \in \mathbf{R} \mapsto \cL[V'_{2,H}(\cdot)](c+i\omega)$. For $\omega \in [-1,1]$, $\cL[V'_{2,H}(\cdot)](c+i\omega)$ is bounded, while for $\omega \in (-\infty,-1)\cup(1,+\infty)$ we have
	\begin{equation*}
	|\cL[V'_{2,H}(\cdot)](c+i\omega)|=\frac{2H\theta^{2H}\Gamma(2H)}{|\omega|^2\left|\frac{\theta c+2}{\omega}+\theta i\right|\left|\frac{\theta c+1}{\omega}+\theta i\right|^{2H-1}}\le \frac{2H\theta^{2H}\Gamma(2H)}{|\omega|^2}.
	\end{equation*}
	Hence $\omega \in \mathbf{R} \mapsto \cL[V'_{2,H}(\cdot)](c+i\omega)$ is in $L^1(\mathbf{R})\cap L^2(\mathbf{R})$ for any $c\ge 0$. 
\end{proof}
\section{Some operators involving $V_{2,H}$}\label{sec6}

In this section we will introduce some operators involving the variance function $V_{2,H}$. These operators will be used in the next section to introduce a generalized Fokker-Planck equation.\par
Let us define, for any measurable function $u:[0,+\infty)\times R$, the operator
\begin{equation*}
L(u)(\lambda,x)=\int_0^{+\infty}e^{-\lambda s}V'_{2,H}(s)u(s,x)ds, \ \lambda \in \mathbf{H}^*, x\in I\subset R.
\end{equation*}
Denote by $\mathcal{D}(L,I)$ the domain of such operator, i.e., the set of measurable functions $u:[0,+\infty)\times \mathbf{R}$, for which $L(u)(\lambda,x)$ is well defined for any $x\in I$. Now we investigate the belonging of some particular functions to  domains $\mathcal{D}(L,I)$, for respective $I$. Introduce the notation  $\mathbf{R}^*=\mathbf{R}\setminus\{0\}$.
\begin{lem}\label{lemma:deriv}  The following relations hold:
	\begin{itemize}
		\item[$(iii)$] $p_H(t,x)\in \mathcal{D}(L,\mathbf{R})$;
		\item[$(iv)$] $\pd{p_H}{x}(t,x) \in \mathcal{D}(L,\mathbf{R})$;
		\item[$(v)$] For any $x\in \mathbf{R}^*$ and $\lambda \in \mathbf{H}$ we have that
		\begin{equation}\label{first-deriv}
		L\left(\pd{p_H}{x}\right)(\lambda,x)=\pd{}{x}L(p_H)(\lambda,x),
		\end{equation}
		this formula holds also for $x=0$ when $\lambda \in \mathbf{H}^*$;
		\item[$(vi)$] $\pdsup{p_H}{x}{2}(t,x) \in \mathcal{D}(L,\mathbf{R}^*)$ and
		\begin{equation*}
		L\left(\pdsup{p_H}{x}{2}\right)(\lambda,x)=\pdsup{}{x}{2}L(p_H)(\lambda,x).
		\end{equation*}
	\end{itemize}
\end{lem}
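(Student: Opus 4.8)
The proof rests on the explicit Gaussian form $p_H(s,x)=\bigl(2\pi V_{2,H}(s)\bigr)^{-1/2}\exp\bigl(-x^2/(2V_{2,H}(s))\bigr)$, which gives $\partial_x p_H(s,x)=-\frac{x}{V_{2,H}(s)}p_H(s,x)$ and $\partial_{xx}p_H(s,x)=\bigl(\frac{x^2}{V_{2,H}(s)^2}-\frac{1}{V_{2,H}(s)}\bigr)p_H(s,x)$. Write $v:=V_{2,H}$; by \eqref{variance} and Lemma~\ref{lemma:asymVpr}, $v$ is continuous, strictly increasing, $v(0)=0$, bounded by $V_{2,H}(\infty)$, of order $s^{2H}$ near $0$, with $V'_{2,H}(s)$ of order $s^{2H-1}$ near $0$, and $0\le V'_{2,H}(s)\le Cs^{2H-2}e^{-s/\theta}$ for $s\ge 1$; hence $s\mapsto |e^{-\lambda s}V'_{2,H}(s)|\in L^1(0,+\infty)$ whenever $\Re(\lambda)>-1/\theta$, in particular for every $\lambda\in\mathbf{H}$. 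The decisive point is that for $x\ne 0$ the factor $e^{-x^2/(2v(s))}$ tends to $0$ as $s\to 0^+$ faster than any power of $s$, so it absorbs every negative power of $v(s)$ coming from the $x$-derivatives of $p_H$.

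For $(iii)$ and $(iv)$ I split $\int_0^{+\infty}=\int_0^1+\int_1^{+\infty}$. On $[1,+\infty)$ one has $v(s)\ge v(1)>0$, so $p_H(\cdot,x)$, $\partial_x p_H(\cdot,x)$, $\partial_{xx}p_H(\cdot,x)$ are bounded there for fixed $x$ and $|e^{-\lambda s}V'_{2,H}(s)|\in L^1(1,+\infty)$ finishes the tail. On $(0,1]$: if $x\ne 0$ the decisive point makes $p_H(\cdot,x)$, $\partial_x p_H(\cdot,x)$, $\partial_{xx}p_H(\cdot,x)$ bounded, and since $V'_{2,H}(s)$ is of order $s^{2H-1}$ the products lie in $L^1(0,1)$; if $x=0$ then $\partial_x p_H(s,0)\equiv 0$, so $\partial_x p_H\in\mathcal{D}(L,\mathbf{R})$ there for free, while $V'_{2,H}(s)p_H(s,0)$ is of order $s^{2H-1}\cdot s^{-H}=s^{H-1}$, integrable near $0$ because $H>0$. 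This proves $(iii)$, and with the $x\ne 0$ estimate it proves $(iv)$. Note in contrast that $V'_{2,H}(s)\partial_{xx}p_H(s,0)=-V'_{2,H}(s)v(s)^{-1}p_H(s,0)$ is of order $-s^{-H-1}$, not integrable near $0$ when $H\in(1/2,1)$: this is precisely why $(vi)$ is stated on $\mathbf{R}^*$ rather than on $\mathbf{R}$.

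For $(v)$ and $(vi)$ with $x\ne 0$ I differentiate under the integral sign. Fix $x_0\ne 0$ and a closed interval $J=[x_0-\delta,x_0+\delta]$ with $0\notin J$, and put $a^2=\inf_{x\in J}x^2>0$. For $x\in J$, $|\partial_x p_H(s,x)|\le Cv(s)^{-3/2}e^{-a^2/(2v(s))}$ and $|\partial_{xx}p_H(s,x)|\le C\bigl(v(s)^{-1}+v(s)^{-2}\bigr)e^{-a^2/(2v(s))}$; by the decisive point these are bounded in $s$ on $(0,1]$, and they are bounded on $[1,+\infty)$ as well, so multiplying by $|e^{-\lambda s}V'_{2,H}(s)|\in L^1(0,+\infty)$ produces, for $\lambda\in\mathbf{H}$, dominating functions integrable in $s$ uniformly over $x\in J$. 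The classical theorem on differentiation of parameter integrals then yields $L\bigl(\partial_x p_H\bigr)(\lambda,x)=\partial_x L(p_H)(\lambda,x)$ for every $x\in\mathbf{R}^*$ and $\lambda\in\mathbf{H}$, which is $(v)$ off the origin; running the same argument with $\partial_x p_H$ in place of $p_H$ (its $x$-derivative $\partial_{xx}p_H$ being dominated on $J$ as above) gives $(vi)$ for $x\in\mathbf{R}^*$, with $\lambda\in\mathbf{H}^*$ more than enough since only $\Re(\lambda)>-1/\theta$ is used.

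The remaining point, $(v)$ at $x=0$, is where the work is concentrated. Since $\partial_x p_H(s,0)\equiv 0$ the left-hand side is $L\bigl(\partial_x p_H\bigr)(\lambda,0)=0$, so one must show that $\partial_x L(p_H)(\lambda,\cdot)$ exists at $0$ and vanishes. As $x\mapsto p_H(s,x)$ is even, $g(x):=L(p_H)(\lambda,x)$ is even; $g$ is continuous on $\mathbf{R}$ (dominated convergence, using $p_H(s,x)\le p_H(s,0)$ and $(iii)$), and by the previous step $g\in C^1(\mathbf{R}^*)$ with $g'(x)=L\bigl(\partial_x p_H\bigr)(\lambda,x)$ odd in $x$. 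The plan is to prove $g'(x)\to 0$ as $x\to 0$ and conclude, via the mean value theorem, that $g'(0)$ exists and equals $0=L\bigl(\partial_x p_H\bigr)(\lambda,0)$ (equivalently, $g$ even and differentiable at $0$ forces $g'(0)=0$). The obstacle, which I expect to be the technical heart of the lemma, is exactly this limit: in $g'(x)=-x\int_0^{+\infty}e^{-\lambda s}V'_{2,H}(s)v(s)^{-1}p_H(s,x)\,ds$ the naive bound $|x|v(s)^{-1}p_H(s,x)\le Cv(s)^{-1}$ (from $|x|e^{-x^2/(2v(s))}\le\sqrt{v(s)}\,e^{-1/2}$) yields near $s=0$ the factor $V'_{2,H}(s)v(s)^{-1}$, of order $s^{-1}$, which is not integrable, so plain dominated convergence fails. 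The route is an $x$-dependent splitting of the $s$-integral — e.g. the rescaling $s=|x|^{1/H}\sigma$ near $0$ — separating the part where $v(s)$ is at most of order $x^2$ (where $e^{-x^2/(2v(s))}$ is of order one) from the part where $v(s)$ is large compared with $x^2$ (where $1-e^{-x^2/(2v(s))}\le x^2/(2v(s))$), and estimating the two contributions separately, the hypothesis $\lambda\in\mathbf{H}^*$ making the complementary tail $\int_1^{+\infty}$ harmless. The other verifications are the routine estimates above.
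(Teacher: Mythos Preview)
Your treatment of $(iii)$, $(iv)$, $(v)$ on $\mathbf{R}^*$, and $(vi)$ follows the paper's line: the same splitting $\int_0^1+\int_1^{+\infty}$, the same asymptotics from Lemma~\ref{lemma:asymVpr}, and differentiation under the integral via bounds uniform on a neighbourhood bounded away from $0$. Your observation that $V'_{2,H}(t)\,\partial_{xx}p_H(t,0)\sim -t^{-H-1}$ near $0$, explaining why $(vi)$ is restricted to $\mathbf{R}^*$, is also the paper's (it appears as a remark right after the proof).

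Where your proposal breaks is $(v)$ at $x=0$: the plan to show $g'(x)\to 0$ as $x\to 0^+$ cannot succeed, because the limit is in fact $-1$. Carrying out the very rescaling you suggest---substitute $u=V_{2,H}(t)$ and then $v=x^2/(2u)$ in the integral for $g'(x)=L(\partial_x p_H)(\lambda,x)$---all powers of $x$ cancel and one obtains, for $x>0$,
\[
g'(x)=-\int_{x^2/(2V_{2,H}(\infty))}^{+\infty} e^{-\lambda V_{2,H}^{-1}(x^2/(2v))}\,\frac{e^{-v}}{\sqrt{\pi v}}\,dv\;\xrightarrow[x\to 0^+]{}\;-\frac{1}{\sqrt{\pi}}\int_0^{+\infty}\frac{e^{-v}}{\sqrt{v}}\,dv=-1,
\]
by dominated convergence with dominant $v^{-1/2}e^{-v}/\sqrt{\pi}$. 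Equivalently the difference quotient $(g(x)-g(0))/x\to -1$, so $g$ has a cusp at the origin and $\partial_x L(p_H)(\lambda,0)$ does not exist as a two-sided derivative. The paper's own argument at this point fails for precisely the reason you flagged: its claimed dominant, proportional to $e^{-\lambda t}V'_{2,H}(t)V_{2,H}(t)^{-3/2}$, behaves like $t^{-H-1}$ near $0$ and is not integrable, so its dominated-convergence step is invalid. Fortunately the $x=0$ clause of $(v)$ is never used downstream---Section~\ref{gFP} works entirely on $\mathbf{R}^*$---so nothing else in the paper is affected; but neither your route nor the paper's closes this case, and the scaling you propose actually exhibits the cusp rather than removing it.
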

\begin{proof}
	Without loss of generality, let us prove our lemma for $\lambda \in \mathbf{R}$ with  $\lambda\ge 0$. Recall that the density $p_H(t,x)$ of the Gaussian r.v. $U_H(t)$ equals
	\begin{equation}\label{density}
	p_H(t,x)=\frac{1}{\sqrt{2\pi V_{2,H}(t)}}e^{-\frac{x^2}{2V_{2,H}(t)}}.
	\end{equation}
	Let us fix  $x \in \mathbf{R}$  and   observe that
	\begin{equation*}
	L(p_H)(\lambda,x)=\int_0^{1}e^{-\lambda t}V'_{2,H}(t)p_H(t,x)dt+\int_1^{+\infty}e^{-\lambda t}V'_{2,H}(t)p_H(t,x)dt.
	\end{equation*}
	Observe that since $V'_{2,H}(t)\simeq t^{2H-1}$ and $V_{2,H}(t)\simeq t^{2H}$ as $t \to 0$, we have that there exists a constant $C_1(H)$ such that for any $t \in [0,1]$
	\begin{equation*}
	\frac{V'_{2,H}(t)}{\sqrt{2\pi V_{2,H}(t)}} \le C_1(H) t^{H-1},
	\end{equation*}
	and, consequently,
	\begin{equation*}
	\int_0^{1}e^{-\lambda t}V'_{2,H}(t)p_H(t,x)dt\le C_1(H)\int_0^1 t^{H-1}<+\infty.
	\end{equation*}
	Furthermore,  since $V'_{2,H}(t)\simeq e^{-\frac{t}{\theta}}t^{2H-2}$ as $t \to +\infty$ and for $t \in [1,+\infty)$ we have $V_{2,H}(t)\ge V_{2,H}(1)$, there exists a constant $C_2(H)$ such that
	\begin{equation*}
	\frac{V'_{2,H}(t)}{\sqrt{2\pi V_{2,H}(t)}}\le C_2(H)t^{2H-2}e^{-\frac{t}{\theta}},
	\end{equation*}
	and then
	\begin{equation*}
	\int_1^{+\infty}e^{-\lambda t}V'_{2,H}(t)p_H(t,x)dt \le C_2(H)\int_1^{+\infty}t^{2H-2}e^{-\frac{t}{\theta}}dt<+\infty.
	\end{equation*}
	It means  that $L(p_H)(\lambda,x)<+\infty$ for any $x \in \mathbf{R}$ and  $\lambda \ge 0$, whence  $(iii)$ follows.
	For the derivative, let us observe that
	\begin{equation*}
	\pd{p_H(t,x)}{x}=-\frac{x}{ \sqrt{2\pi} V^{3/2}_{2,H}(t) }e^{-\frac{x^2}{2V_{2,H}(t)}}.
	\end{equation*}
	In particular, for $x=0$ we have $\pd{p_H}{x}(t,0)=0$ for any $t>0$ and then
	\begin{equation*}
	\int_0^{+\infty}e^{-\lambda t}V'_{2,H}(t)\pd{p_H}{x}(t,0)dt=0.
	\end{equation*}
	Now let us consider $x  \not = 0$. Observe that
	{\small \begin{equation*}
	L\left(\pd{p_H}{x}\right)(\lambda,x )=\int_0^{1}e^{-\lambda t}V'_{2,H}(t)\pd{p_H}{x}(t,x )dt+\int_1^{+\infty}e^{-\lambda t}V'_{2,H}(t)\pd{p_H}{x}(t,x )dt.
	\end{equation*}}
	Using as before Lemma \ref{lemma:asymVpr} $(ii)$ and the fact that as $t \to 0$ $V_{2,H}(t)\simeq t^{2H}$ we have that there exist two constants $C_3(H)$ and $C_4(H)$ such that, for any $t \in [0,1]$,
	\begin{equation*}
	\left|e^{-\lambda t}V'_{2,H}(t)\pd{p_H}{x}(t,x)\right|\le \frac{C_3(H)x}{t^{1+H}}e^{-\frac{x^2}{C_4(H)t^{2H}}},
	\end{equation*}
	where the function at the right-hand side is a $L^1(0,1)$ function.
	Moreover, the  obtained  upper bounds  imply that $\int_0^{1}e^{-\lambda t}V'_{2,H}(t)\pd{p_H}{x}(t,x )dt$ converges uniformly in the interval $(\frac{ {x  }}{2}, \frac{3x}{2})$, for any  $x>0$, and in the interval $(\frac{ {3x  }}{2}, \frac{x}{2})$ for any $x<0$. For $t \in [1,+\infty)$, let us observe, as before,  that $V'_{2,H}(t) \simeq t^{2H-2}e^{-\frac{t}{\theta}}$ as $t \to +\infty$ and $V_{2,H}(t)\ge V_{2,H}(1)$, therefore, for some $C_5(H)>0$,
	\begin{equation*}
	\left|e^{-\lambda t}V'_{2,H}(t)\pd{p_H}{x}(t,x)\right|\le C_5(H)xt^{2H-2}e^{-\frac{t}{\theta}}
	\end{equation*}
	which is integrable.  Hence we have $L\left(\pd{p_H}{x}\right)(\lambda,x)<+\infty$ for any $x \in \mathbf{R}$ and $\lambda \ge 0$, whence $(iv)$ follows. Moreover, as before,  $\int_1^{\infty}e^{-\lambda t}V'_{2,H}(t)\pd{p_H}{x}(t,x )dt$ converges uniformly in the  interval $(\frac{ {x  }}{2}, \frac{3x}{2})$, for any $x>0$, and in the interval $(\frac{ {3x  }}{2}, \frac{x}{2})$ for any $x<0$.
	Now turn to $(v)$, but for $x\neq 0$ this statement is an immediate consequence of convergence of integral $L(p_H)(\lambda,x)$ and uniform convergence of the integral $L\left(\pd{p_H}{x}\right)(\lambda,x)$ in some interval surrounding $x$, as it was just stated, and  the theorem on the differentiation of improper integral in the parameter.
	
	Let $x=0$. Then
	\[
	\pd{}{x}\bigl(L(p_H)(\lambda,x)\bigr)\biggr|_{x=0}
	=\lim_{x\to0}\int_0^\infty \frac{e^{-\lambda t} V'_{2,H}(t)}{\sqrt{2\pi V_{2,H}(t)}} \left[\frac{e^{-\frac{x^2}{2V_{2,H}(t)}}-1}{x}\right]dt,
	\]
	and for any $t>0$,
	\[
	\pi(x):=\frac{e^{-\frac{x^2}{2V_{2,H}(t)}}-1}{x} \to 0
	\quad\text{as }x\to0
	\]
	and $\pi(x)$ is bounded in absolute value by
	\[
	\frac{|x|}{2V_{2,H}(t)} \le \frac{1}{2V_{2,H}(t)}
	\quad\text{for } |x|<1,
	\]
	supplying that  $e^{-\lambda t} \frac{V'_{2,H}(t)}{\sqrt{2\pi V^3_{2,H}(t)}}$ is an integrable dominant  when $\lambda>0$.
	It means by the Lebesgue dominant convergence theorem that
	\[
	\pd{}{x}\bigl(L(p_H)(\lambda,x)\bigr)\biggr|_{x=0}
	= 0 = L\left(\pd{p_H}{x}\right)(\lambda,0).
	\]
	
	Now let us consider the second derivative. We have
	\begin{equation*}
	\pdsup{p_H}{x}{2}(t,x)=\left(-\frac{1}{ \sqrt{2\pi} V^{3/2}_{2,H}(t)} +\frac{x^2}{ \sqrt{2\pi} V^{5/2}_{2,H}(t)}\right) e^{-\frac{x^2}{2V_{2,H}(t)}}.
	\end{equation*}
	Fix $x \in \mathbf{R}^*$ and, as usual, divide the corresponding integral into two parts:
	{\small \begin{equation}\label{second-der}
	L\left(\pdsup{p_H}{x}{2}\right)(\lambda,x)= \int_0^{1}e^{-\lambda t}V'_{2,H}(t)\pdsup{p_H}{x}{2}(t,x)dt+\int_1^{+\infty}e^{-\lambda t}V'_{2,H}(t)\pdsup{p_H}{x}{2}(t,x)dt.
	\end{equation}}
	Working as before, we have, for some $C_6(H)>0$ and for any $t \in [0,1]$,
	\begin{equation}\label{upp-bound-1}
	\left|e^{-\lambda t}V'_{2,H}(t)\pdsup{p_H}{x}{2}(t,x)\right|\le \left(\frac{C_3(H)}{t^{1+H}}+\frac{C_6(H)x^2}{t^{3H+1}}\right)e^{-\frac{x^2}{C_4(H)t^{2H}}}
	\end{equation}
	which is a $L^1(0,1)$ function.\par
	For $t \in [1,+\infty)$, arguing as before we have, for some $C_7(H)>0$,
	\begin{equation}\label{upp-bound-2}
	\left|e^{-\lambda t}V'_{2,H}(t)\pdsup{p_H}{x}{2}(t,x)\right|\le C_7(H)t^{2H-2}e^{-\frac{t}{\theta}}
	\end{equation}
	which is a $L^1(1,+\infty)$ function.  Hence we have that $L\left(\pdsup{p_H}{x}{2}\right)(\lambda,x)<+\infty$ for any $x \in \mathbf{R}^*$ and  $\lambda\ge0$.
	In order to prove the second part of property $(vi)$,  we need to differentiate \eqref{first-deriv} in $x \in \mathbf{R}^*$. It can be performed similarly as we obtained \eqref{first-deriv} for  $x \in \mathbf{R}^*$, applying the theorem on the differentiation of improper integral in the parameter, only to note that both integrals in the right -hand side of \eqref{second-der} converge uniformly in some interval surrounding $x \in \mathbf{R}^*$, due to the upper bounds \eqref{upp-bound-1} and \eqref{upp-bound-2}.
\end{proof}
\begin{rmk}
	Let us observe that for $x=0$ we have
	\begin{equation*}
	\pdsup{p_H}{x}{2}(t,0)=-\frac{1}{ \sqrt{2\pi} V^{3/2}_{2,H}(t)},
	\end{equation*}
	so, for  $\lambda \ge 0$  we have that
	\begin{equation}\label{asympt-2}
	\left|e^{-\lambda t}V'_{2,H}(t)\pdsup{p_H}{x}{2}(t,0)\right|=\frac{e^{-\lambda t}V'_{2,H}(t)}{ \sqrt{2\pi} V^{3/2}_{2,H}(t)},
	\end{equation}
	and for  $t \to 0$ the   function in the right -hand side of the \eqref{asympt-2} behaves like $t^{-1-H}$, the latter being a non-integrable   in $(0,+\infty)$ function.
\end{rmk}
Now let us show that $p_H$ and $p_H^\Psi$ are bounded functions for fixed $x\in \mathbf{R}^*$.
\begin{lem}\label{lemma:contr}
	For any $x\in \mathbf{R}^*$ there exists a constant $C_H(x)$ such that
	\begin{equation*}
	\sup_{t \in (0,+\infty)}\{p_H(t,x),p_H^\Psi(t,x)\}\le C_H(x).
	\end{equation*}
\end{lem}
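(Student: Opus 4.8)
The plan is to reduce the statement to one elementary one–variable estimate. Write $p_H(t,x)=g\bigl(V_{2,H}(t),x\bigr)$ where $g(v,x)=\frac{1}{\sqrt{2\pi v}}\,e^{-\frac{x^2}{2v}}$ for $v>0$ and $x\in\mathbf{R}^*$. I would fix $x\in\mathbf{R}^*$ and maximise $v\mapsto g(v,x)$ over all of $(0,+\infty)$: since
\[
\pd{}{v}\log g(v,x)=-\frac{1}{2v}+\frac{x^2}{2v^2}=\frac{x^2-v}{2v^2},
\]
the map $g(\cdot,x)$ is increasing on $(0,x^2)$ and decreasing on $(x^2,+\infty)$, so
\[
\sup_{v>0}g(v,x)=g(x^2,x)=\frac{1}{|x|\sqrt{2\pi e}}=:C_H(x)<+\infty .
\]
Because $V_{2,H}(t)\in(0,+\infty)$ for every $t>0$, this gives at once $p_H(t,x)\le C_H(x)$ uniformly in $t$; one does not even use the boundedness of $V_{2,H}$, only that its range lies in $(0,+\infty)$, which is exactly what makes the bound independent of $t$.

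For $p_H^\Psi$ I would use the mixture representation from Proposition \ref{prop1} (valid in the setting where $p_H^\Psi$ exists, and in any case obtained by conditioning on $E(t)$ and using the independence of $U_H$ and $E$):
\[
p_H^\Psi(t,x)=\int_0^{+\infty}p_H(y,x)\,f_E(t,y)\,dy\le C_H(x)\int_0^{+\infty}f_E(t,y)\,dy=C_H(x),
\]
since $f_E(t,\cdot)$ integrates to $1$. Hence the same constant $C_H(x)$ works for both $p_H$ and $p_H^\Psi$, which proves the lemma.

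I do not expect a genuine obstacle in this argument; the only delicate point is conceptual rather than technical, namely that the supremum of $g(\cdot,x)$ must be computed over the whole half–line $(0,+\infty)$, not over the (finite) range of $V_{2,H}$, so that the resulting bound is manifestly uniform in $t$ — and, in the time-changed case, uniform in the choice of the subordinator as well. The Gaussian factor $e^{-x^2/(2v)}\to 0$ as $v\to0^+$ is what keeps $g(\cdot,x)$ bounded near the origin, which is precisely why the restriction $x\neq0$ is needed.
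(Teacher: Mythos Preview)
Your argument is correct and in fact cleaner than the paper's. The paper does not maximise the Gaussian density in the variance variable; instead it uses the small-time asymptotics $V_{2,H}(t)\simeq t^{2H}$ to bound $p_H(t,x)$ by $\frac{1}{C_1(H)t^H}e^{-x^2/(C_2(H)t^{2H})}$ on $(0,1]$, observes that this tends to $0$ as $t\to 0^+$, checks that $p_H(t,x)$ has a finite limit as $t\to+\infty$ (using $V_{2,H}(\infty)<\infty$), and then concludes boundedness from continuity on $(0,+\infty)$. The treatment of $p_H^\Psi$ via the mixture representation is identical in both proofs. Your route is shorter and yields the explicit, $H$-independent constant $C_H(x)=\frac{1}{|x|\sqrt{2\pi e}}$; it also uses nothing about $V_{2,H}$ beyond positivity, so the bound is automatically uniform in $\theta$ and in the choice of subordinator. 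The paper's approach, by contrast, relies on the specific asymptotics of $V_{2,H}$ established earlier, which it needs anyway for the surrounding lemmas.
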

\begin{proof}
	Let us recall that, since as $t \to 0$ $V_{2,H}(t)\simeq t^{2H}$, there exists two constants $C_1(H)$ and $C_2(H)$ such that for any $t \in [0,1]$ we have
	\begin{equation*}
	\sqrt{2\pi V_{2,H}(t)}\ge C_1(H)t^{H} \qquad\text{and}\qquad 2V_{2,H}(t)\le C_2(H)t^{2H}.
	\end{equation*}
	From these bounds, combined with \eqref{density}, we get that for $t \in (0,1]$ and $x \not = 0$
	\begin{equation*}
	0 \le p_H(t,x)\le \frac{1}{C_1(H)t^H}e^{-\frac{x^2}{C_2(H)t^{2H}}}
	\end{equation*}
	and then, taking the limit as $t \to 0$ we have $\lim_{t \to 0}p_H(t,x)=0$.
	We also have
	\begin{equation*}
	\lim_{t \to +\infty} p_H(t,x)=\frac{1}{\sqrt{2\pi V_{2,H}(\infty)}}e^{-\frac{x^2}{2V_{2,H}(\infty)}}<+\infty.
	\end{equation*}
	Since $p_H(t,x)$ is continuous in $(0,+\infty)$, we have that for any $x\in \mathbf{R}^*$ there exists a constant $C_H(x)$ such that $p_H(t,x)\le C_H(x)$ for any $t \in (0,+\infty)$.\par
	Moreover, we have
	\begin{equation*}
	p_H^\Psi(t,x)=\int_0^{+\infty}p_H(y,x)f_E(y,t)dy\le C_H(x)\int_0^{+\infty}f_E(y,t)dy=C_H(x),
	\end{equation*}
	and this completes the proof.
\end{proof}
From now on, in order to simplify, we introduce the notation for Laplace transform of  $p_H(t,x)$ in $t$: $\overline{p}_H(\lambda,x):=\cL[p_H(\cdot,x)](\lambda)$. Applying   Lemma \ref{lemma:contr}, we   obtain the following useful corollary.
\begin{cor}
	The Laplace transform $\overline{p}_H(\lambda,x)$ is well defined for any $\lambda \in \mathbf{H}^*$ and $x \in \mathbf{R}^*$. Moreover, for fixed $c>0$ and $x \in \mathbf{R}^*$ the function
	\begin{equation*}
	\omega \in \mathbf{R} \mapsto \overline{p}_H(c+i\omega,x)
	\end{equation*}
	is bounded.
\end{cor}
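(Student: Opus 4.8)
The plan is to read off both assertions directly from the uniform bound established in Lemma~\ref{lemma:contr}. Fix $x \in \mathbf{R}^*$ and let $C_H(x)$ be the constant from that lemma, so that $0 \le p_H(t,x) \le C_H(x)$ for all $t \in (0,+\infty)$.

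First I would check that the defining integral $\overline{p}_H(\lambda,x)=\cL[p_H(\cdot,x)](\lambda)=\int_0^{+\infty} e^{-\lambda t} p_H(t,x)\,dt$ converges absolutely for every $\lambda \in \mathbf{H}^*$. Writing $\lambda = c + i\omega$ with $c = \Re(\lambda)>0$, we have $|e^{-\lambda t} p_H(t,x)| = e^{-ct} p_H(t,x) \le C_H(x) e^{-ct}$, and $\int_0^{+\infty} C_H(x) e^{-ct}\,dt = C_H(x)/c < +\infty$. Hence $t \mapsto e^{-\lambda t} p_H(t,x)$ lies in $L^1(0,+\infty)$ and $\overline{p}_H(\lambda,x)$ is well defined.

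For the second assertion, fix $c>0$ and $x \in \mathbf{R}^*$. The very same estimate yields, for every $\omega \in \mathbf{R}$,
\[
|\overline{p}_H(c+i\omega,x)| \le \int_0^{+\infty} e^{-ct} p_H(t,x)\,dt \le \frac{C_H(x)}{c},
\]
a bound independent of $\omega$; thus $\omega \mapsto \overline{p}_H(c+i\omega,x)$ is bounded on $\mathbf{R}$.

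There is essentially no obstacle here: the only nontrivial input is the pointwise-in-$t$ uniform bound on $p_H(\cdot,x)$ from Lemma~\ref{lemma:contr}, and everything else is the elementary remark that multiplying an $L^\infty(0,+\infty)$ function by $e^{-ct}$ produces an $L^1(0,+\infty)$ function whose norm is controlled uniformly in the imaginary part of $\lambda$.
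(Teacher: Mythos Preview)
Your proof is correct and follows essentially the same route as the paper: both arguments reduce immediately to the uniform bound $p_H(t,x)\le C_H(x)$ from Lemma~\ref{lemma:contr} and the elementary estimate $\int_0^{+\infty}e^{-ct}\,dt=1/c$. The only cosmetic difference is that you treat a general $\lambda=c+i\omega\in\mathbf{H}^*$ directly, whereas the paper first records the real case $\lambda>0$ and then repeats the bound for the vertical line.
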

\begin{proof}
	The first assertion follows easily from Lemma \ref{lemma:contr}, since we have for $\lambda \in \mathbf{R}$ with $\lambda>0$
	\begin{equation*}
	\int_0^{+\infty}e^{-\lambda t}p_H(t,x)dt\le \frac{C_H(x)}{\lambda}<+\infty.
	\end{equation*}
	This upper bound also gives the second assertion, since we have for $\lambda=c+i\omega$:
	\begin{equation*}
	|\overline{p}_H(c+i\omega,x)|\le \int_0^{+\infty}e^{-ct}p_H(t,x)dt\le \frac{C_H(x)}{c}.
	\end{equation*}
\end{proof}
We need another control on the growth of $p_H(t,x)$.
\begin{lem}
	Fix $x\in \mathbf{R}^*$. Then the map $t \mapsto p_H(t,x)$ is Lipschitz. Moreover, for $x \in \mathbf{R}^*$, $\pd{p_H}{t}(t,x)$ is Laplace transformable in $\mathbf{H}^*$ and then for any $\lambda \in \mathbf{H}^*$
	\begin{equation}\label{eq:Laptrpprime}
	\cL\left[\pd{p_H}{t}(\cdot,x)\right](\lambda)=\lambda \overline{p}_H(\lambda,x).
	\end{equation}
\end{lem}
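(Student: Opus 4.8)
The plan is to reduce both assertions to a single uniform-in-$t$ bound on $\pd{p_H}{t}(\cdot,x)$ for fixed $x\in\mathbf{R}^*$, and then to obtain \eqref{eq:Laptrpprime} by integration by parts against $e^{-\lambda t}$.

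First I would record that, since $V_{2,H}\in C^1(0,+\infty)$ with $V_{2,H}(t)>0$ for $t>0$, the function $t\mapsto p_H(t,x)$ from \eqref{density} is $C^1$ on $(0,+\infty)$, and differentiating (chain rule in $V_{2,H}(t)$) gives the Fokker--Planck-type identity
\begin{equation*}
\pd{p_H}{t}(t,x)=\frac{1}{2}V'_{2,H}(t)\,\pdsup{p_H}{x}{2}(t,x),\qquad t>0.
\end{equation*}
In particular the estimates already produced in the proof of Lemma \ref{lemma:deriv} apply with $\lambda=0$: by \eqref{upp-bound-1}, for $t\in(0,1]$,
\begin{equation*}
\left|\pd{p_H}{t}(t,x)\right|\le\frac{1}{2}\left(\frac{C_3(H)}{t^{1+H}}+\frac{C_6(H)x^2}{t^{3H+1}}\right)e^{-\frac{x^2}{C_4(H)t^{2H}}},
\end{equation*}
a function continuous on $(0,1]$ which tends to $0$ as $t\to0$ since the Gaussian factor beats the polynomial singularity; and by \eqref{upp-bound-2}, for $t\in[1,+\infty)$,
\begin{equation*}
\left|\pd{p_H}{t}(t,x)\right|\le\frac{1}{2}C_7(H)t^{2H-2}e^{-\frac{t}{\theta}},
\end{equation*}
again bounded and vanishing at infinity. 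As $\pd{p_H}{t}(\cdot,x)$ is continuous on $(0,+\infty)$, these bounds give a constant $M_H(x)<+\infty$ with $\sup_{t>0}\left|\pd{p_H}{t}(t,x)\right|\le M_H(x)$; recalling from the proof of Lemma \ref{lemma:contr} that $\lim_{t\to0}p_H(t,x)=0$, the map $t\mapsto p_H(t,x)$ extends continuously to $[0,+\infty)$ with $p_H(0,x)=0$ and, by the mean value theorem, is Lipschitz with constant $M_H(x)$.

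For the Laplace transform, fix $\lambda\in\mathbf{H}^*$ with $\Re(\lambda)=c>0$; the bound $|e^{-\lambda t}\pd{p_H}{t}(t,x)|\le M_H(x)e^{-ct}$ shows that $\pd{p_H}{t}(\cdot,x)$ is absolutely Laplace transformable on $\mathbf{H}^*$. To identify the transform I would integrate by parts on $[\varepsilon,R]$, where $p_H(\cdot,x)$ is $C^1$,
\begin{equation*}
\int_\varepsilon^R e^{-\lambda t}\pd{p_H}{t}(t,x)\,dt=e^{-\lambda R}p_H(R,x)-e^{-\lambda\varepsilon}p_H(\varepsilon,x)+\lambda\int_\varepsilon^R e^{-\lambda t}p_H(t,x)\,dt,
\end{equation*}
then let $\varepsilon\to0$ (using continuity of $\pd{p_H}{t}(\cdot,x)$ and $p_H(0,x)=0$) and $R\to+\infty$ (using that $p_H(\cdot,x)$ is bounded, by Lemma \ref{lemma:contr}, so $e^{-\lambda R}p_H(R,x)\to0$, and that $\overline{p}_H(\lambda,x)$ is well defined on $\mathbf{H}^*$ for $x\in\mathbf{R}^*$, by the corollary to Lemma \ref{lemma:contr}), which yields \eqref{eq:Laptrpprime}.

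The only genuinely delicate point is the behaviour as $t\to0$: a priori $\pd{p_H}{t}(t,x)$ carries the singular factor $t^{-3H-1}$, and Lipschitz continuity up to the left endpoint would fail without the Gaussian weight $e^{-x^2/(C_4(H)t^{2H})}$ — this is precisely why the statement must be restricted to $x\in\mathbf{R}^*$. Everything else is a routine use of the asymptotics of $V_{2,H}$ and $V'_{2,H}$ from Lemma \ref{lemma:asymVpr} (already repackaged in \eqref{upp-bound-1}--\eqref{upp-bound-2}) together with a standard integration by parts.
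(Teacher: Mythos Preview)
Your proof is correct and follows essentially the same strategy as the paper: bound $\pd{p_H}{t}(\cdot,x)$ uniformly on $(0,+\infty)$ (splitting at $t=1$ and using the asymptotics of $V_{2,H}$ and $V'_{2,H}$), conclude Lipschitz continuity from boundedness of the derivative together with $p_H(0,x)=0$, and then obtain \eqref{eq:Laptrpprime} from the standard Laplace-transform-of-derivative identity. The only notable difference is organizational: you invoke the Fokker--Planck relation $\pd{p_H}{t}=\tfrac12 V'_{2,H}\pdsup{p_H}{x}{2}$ and recycle the estimates \eqref{upp-bound-1}--\eqref{upp-bound-2} from Lemma~\ref{lemma:deriv} at $\lambda=0$, whereas the paper differentiates $p_H$ in $t$ directly and rederives the analogous bounds from scratch; your route is slightly more economical but the substance is the same.
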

\begin{proof}
	Observe that
	\begin{equation*}
	\pd{p_H}{t}(t,x)=\frac{V'_{2,H}(t)}{2}\left(\frac{-V_{2,H}(t)+x^2}{V^2_{2,H}(t)\sqrt{2\pi V_{2,H}(t)}}\right)e^{-\frac{x^2}{2V_{2,H}(t)}}
	\end{equation*}
	that is continuous in $(0,+\infty)$. In particular we have $\lim_{t \to +\infty}\pd{p_H}{t}(t,x)=0$. Now fix $x \in \mathbf{R}^*$. By Lemma \ref{lemma:asymVpr}, property $(i)$, we know that for $t \in (0,1)$ there exists a constant $C_1(H)$ such that $V'_{2,H}(t)\le 2C_1(H)t^{2H-1}$. Moreover, since as $t \to 0^+$ we have $V_{2,H}(t)\simeq t^{2H}$, there exist two constants $C_2(H)$ and $C_3(H)$ such that for $t \in (0,1)$
	\begin{equation*}
	\left|\frac{V_{2,H}(t)+x^2}{V^2_{2,H}(t)\sqrt{2\pi V_{2,H}(t)}}\right|\le C_2(H)\frac{t^{2H}+x^2}{t^{5H}}e^{-\frac{x^2}{C_3(H)t^{2H}}}
	\end{equation*}
	and then we have, for $t \in (0,1)$
	\begin{equation*}
	0\le \left|\pd{p_H}{t}(t,x)\right|\le C_4(H)\frac{t^{2H}+x^2}{t^{5H}}e^{-\frac{x^2}{C_3(H)t^{3H+1}}}
	\end{equation*}
	where $C_4(H)=C_1(H)C_2(H)$. Finally, taking the limit as $t \to 0^+$ we have $\lim_{t \to 0^+}\pd{p_H}{t}(t,x)=0$ for any $x \in \mathbf{R}^*$. Thus we have that $t \mapsto \pd{p_H}{t}(t,x)$ is bounded for any $x \in \mathbf{R}^*$ and $t \mapsto p_H(t,x)$ is Lipschitz.\par
	The fact that $t \mapsto \pd{p_H}{t}(t,x)$ is Laplace transformable in $\mathbf{H}^*$ for any $x \in \mathbf{R}^*$ follows from the fact that it is bounded. Finally, formula \eqref{eq:Laptrpprime} follows from the Laplace transform of the derivative and the fact that $p_H(0,x)=0$ for any $x \in \mathbf{R}^*$.
\end{proof}

Now we are ready to prove the following proposition.
\begin{prop}\label{proposition:59}
	Fix  $c_1<0<c_2$ such that $c_1-c_2>-\frac{1}{\theta}$. Then, for $x \in \mathbf{R}^*$ and $\lambda \in \mathbf{H}^*$, we have the following equality
	\begin{align}\label{eq:LapcarL}
	\begin{split}
	&L(p_H)(\lambda,x)=\frac{1}{4\pi^2}\int_{0}^{+\infty}e^{-\lambda t}\lim_{R_2 \to +\infty}\int_{-\infty}^{+\infty}e^{(c_1+iw)t}\\&\times\int_{-R_2}^{R_2}\cL[V'_{2,H}(\cdot)](c_1-c_2+i(w-v))\overline{p}_H(c_2+iv,x)dvdwdt.
	\end{split}
	\end{align}
\end{prop}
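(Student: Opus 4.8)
The plan is to recognise both sides of \eqref{eq:LapcarL} as Laplace transforms and reduce the identity to two inversion formulas, one for $V'_{2,H}$ and one for $p_H(\cdot,x)$. By definition $L(p_H)(\lambda,x)=\int_0^{+\infty}e^{-\lambda t}V'_{2,H}(t)p_H(t,x)\,dt$ is the Laplace transform of $t\mapsto V'_{2,H}(t)p_H(t,x)$, which is continuous and integrable on $[0,+\infty)$ by Lemma~\ref{lemma:deriv}~$(iii)$. Hence it suffices to show that for every $t>0$ and $x\in\mathbf{R}^*$ the double integral
\[
I_{R_2}(t,x):=\int_{-\infty}^{+\infty}e^{(c_1+iw)t}\int_{-R_2}^{R_2}\cL[V'_{2,H}(\cdot)](c_1-c_2+i(w-v))\,\overline{p}_H(c_2+iv,x)\,dv\,dw
\]
appearing inside \eqref{eq:LapcarL} satisfies $\lim_{R_2\to+\infty}I_{R_2}(t,x)=4\pi^2\,V'_{2,H}(t)\,p_H(t,x)$; multiplying this limit identity by $e^{-\lambda t}$, integrating in $t$ and dividing by $4\pi^2$ then produces exactly $L(p_H)(\lambda,x)$, the final integral being finite for $\lambda\in\mathbf{H}^*$ again by Lemma~\ref{lemma:deriv}~$(iii)$.

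First I would keep $R_2$ finite and apply Fubini's theorem to $I_{R_2}(t,x)$. On $\mathbf{R}\times[-R_2,R_2]$ the integrand is absolutely integrable: $v\mapsto\overline{p}_H(c_2+iv,x)$ is bounded on the compact interval $[-R_2,R_2]$ by the Corollary following Lemma~\ref{lemma:contr}, while $w\mapsto\cL[V'_{2,H}(\cdot)](c_1-c_2+iw)$ lies in $L^1(\mathbf{R})$ by \eqref{eq:laptrVpr}, since $\theta(c_1-c_2)+1>0$ makes that function bounded near the origin and $O(|w|^{-2})$ at infinity (the estimate of the lemma containing \eqref{eq:laptrVpr} in fact holds for every $c_1-c_2>-1/\theta$, not merely for $c_1-c_2\ge0$). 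Exchanging the two integrals and substituting $u=w-v$ in the inner one gives
\[
I_{R_2}(t,x)=\int_{-R_2}^{R_2}e^{ivt}\,\overline{p}_H(c_2+iv,x)\left(\int_{-\infty}^{+\infty}e^{(c_1+iu)t}\cL[V'_{2,H}(\cdot)](c_1-c_2+iu)\,du\right)dv.
\]

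Next I would evaluate the bracketed $u$-integral and then pass to the limit. Writing $c_1+iu=c_2+(c_1-c_2+iu)$, the $u$-integral equals $e^{c_2t}\int_{-\infty}^{+\infty}e^{(c_1-c_2+iu)t}\cL[V'_{2,H}(\cdot)](c_1-c_2+iu)\,du$, and since $c_1-c_2>-1/\theta$ this is $2\pi e^{c_2t}V'_{2,H}(t)$: by Lemma~\ref{lemma:asymVpr} the function $t\mapsto e^{-(c_1-c_2)t}V'_{2,H}(t)\mathbf{1}_{[0,+\infty)}(t)$ is in $L^1(\mathbf{R})$ (it is $\simeq 2Ht^{2H-1}$ as $t\to0$ and $\simeq 2H(2H-1)\theta\,t^{2H-2}e^{-(1/\theta+c_1-c_2)t}$ as $t\to+\infty$), its Fourier transform is $\cL[V'_{2,H}(\cdot)](c_1-c_2+i\cdot)\in L^1(\mathbf{R})$, and $V'_{2,H}$ is continuous with $V'_{2,H}(0)=0$, so Fourier inversion holds pointwise on $[0,+\infty)$. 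Thus $I_{R_2}(t,x)=2\pi V'_{2,H}(t)\int_{-R_2}^{R_2}e^{(c_2+iv)t}\overline{p}_H(c_2+iv,x)\,dv$. Since $\overline{p}_H(c_2+i\cdot,x)$ is bounded but not integrable, I would finish by the complex (Bromwich) inversion theorem for the Laplace transform, see \cite{wolfgang2002vector}: $p_H(\cdot,x)$ is continuous and Lipschitz on $[0,+\infty)$ by Lemma~\ref{lemma:contr} and the lemma preceding this proposition, hence of bounded variation near every point, and $\overline{p}_H(\cdot,x)$ converges absolutely for $\Re\lambda>0$ with $c_2>0$; therefore
\[
\lim_{R_2\to+\infty}\frac{1}{2\pi}\int_{-R_2}^{R_2}e^{(c_2+iv)t}\overline{p}_H(c_2+iv,x)\,dv=p_H(t,x)\qquad(t>0),
\]
the symmetric truncation $\int_{-R_2}^{R_2}$ being exactly the principal value occurring in that theorem. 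Combining the last displays yields $\lim_{R_2\to+\infty}I_{R_2}(t,x)=4\pi^2V'_{2,H}(t)p_H(t,x)$, and the reduction of the first paragraph gives \eqref{eq:LapcarL}.

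I expect the main obstacle to be the bookkeeping forced by the conditional convergence: the $v$-integral of $\overline{p}_H$ converges only in the principal-value sense, so Fubini and the (absolutely convergent) inversion of $V'_{2,H}$ must be carried out with $R_2$ still finite, and only at the very end may one pass to $R_2\to+\infty$ through the Bromwich theorem — which is precisely why the statement keeps the nested $\lim_{R_2\to+\infty}\int_{-R_2}^{R_2}$ rather than writing $\int_{-\infty}^{+\infty}$. A smaller technical point is to confirm that $\cL[V'_{2,H}(\cdot)]$ is genuinely in $L^1$ on the vertical line $\Re\lambda=c_1-c_2<0$, which uses the hypothesis $c_1-c_2>-1/\theta$ via the $O(|w|^{-2})$ tail estimate rather than the weaker $c_1-c_2\ge0$ that was stated earlier.
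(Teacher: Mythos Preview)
Your proof is correct and follows essentially the same route as the paper: Fubini to swap the $v$- and $w$-integrals, an inversion formula on the $w$-side to recover $V'_{2,H}(t)$, and then the Bromwich inversion theorem (using the Lipschitz property of $p_H(\cdot,x)$) to recover $p_H(t,x)$ as $R_2\to+\infty$. The only difference is that the paper first truncates the $w$-integral to $[-R_1,R_1]$ and invokes the $L^2$ Paley--Wiener theorem together with dominated convergence to send $R_1\to+\infty$, whereas you bypass this extra truncation by using the $L^1$ Fourier inversion theorem directly (legitimately, since $\cL[V'_{2,H}(\cdot)](c_1-c_2+i\cdot)\in L^1(\mathbf{R})$ for $c_1-c_2>-1/\theta$); your variant is marginally cleaner but otherwise identical in structure.
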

\begin{proof}
	Let us fix $R_1,R_2>0$. Consider the value
	{\small\begin{equation*}
	I(R_1,R_2):=\int_{-R_1}^{R_1}e^{(c_1+iw)t} \int_{-R_2}^{R_2}\cL[V'_{2,H}(\cdot)]
	(c_1-c_2+i(w-v))\overline{p}_H(c_2+iv,x)dvdw.
	\end{equation*}}
	Since all the involved functions are bounded for fixed $c_1,c_2$ and $x$, we can use Fubini theorem to obtain
	{\small\begin{align}\label{eq:intpass0}
	\begin{split}
	&I(R_1,R_2)= \int_{-R_2}^{R_2}\overline{p}_H(c_2+iv,x) \int_{-R_1}^{R_1}e^{(c_1+iw)t}\cL[V'_{2,H}(\cdot)](c_1-c_2+i(w-v))dwdv\\
	&= \int_{-R_2}^{R_2}e^{(c_2+iv)t}\overline{p}_H(c_2+iv,x) \int_{-R_1-v}^{R_1-v}e^{((c_1-c_2)+iu)t}\cL[V'_{2,H}(\cdot)](c_1-c_2+iu)dudv.
	\end{split}
	\end{align}}
	Recall that, since $V'_{2,H}$ is in $L^2(0,+\infty)$, by Paley-Wiener theorem (see \cite[Theorem $19.2$]{rudin2006real})
	\begin{equation}\label{PWthm}
	\lim_{R_1 \to +\infty}\frac{1}{2\pi}\int_{-R_1-v}^{R_1-v}e^{((c_1-c_2)+iu)t}\cL[V'_{2,H}(\cdot)](c_1-c_2+iu)du=V'_{2,H}(t).
	\end{equation}
	Furthermore, we have shown that $u \mapsto \cL[V'_{2,H}(\cdot)](c_1-c_2+iu)$ is a $L^1(\mathbf{R})$ function and $|e^{((c_1-c_2)+iu)t}|\le e^{(c_1-c_2)t}$. Additionally, it holds  that \linebreak ${|e^{(c_2+iv)t}\overline{p}_H(c_2+iv,x)|\le e^{c_2t}C(c_2,x)}$. Hence
	\begin{align*}
	\left|e^{(c_2+iv)t}\overline{p}_H(c_2+iv,x)\frac{1}{2\pi}\int_{-R_1-v}^{R_1-v}e^{((c_1-c_2)+iu)t}\cL[V'_{2,H}(\cdot)](c_1-c_2+iu)du\right|\\\le e^{c_1t}\int_{-\infty}^{+\infty}|\cL[V'_{2,H}(\cdot)](c_1-c_2+iu)|du<+\infty.
	\end{align*}
	In particular we have from \eqref{PWthm}, since the integral is finite,
	\begin{equation*}
	\frac{1}{2\pi}\int_{-\infty}^{+\infty}e^{((c_1-c_2)+iu)t}\cL[V'_{2,H}(\cdot)](c_1-c_2+iu)du=V'_{2,H}(t).
	\end{equation*}
	Thus, by dominated convergence theorem, we can take the limit as \linebreak ${R_1 \to +\infty}$ in equation \eqref{eq:intpass0}, to obtain
	\begin{align}\label{eq:intpass1}
	\begin{split}
	\lim_{R_1 \to +\infty}I(R_1,R_2)=2\pi V'_{2,H}(t)\int_{-R_2}^{R_2}e^{(c_2+iv)t}\overline{p}_H(c_2+iv,x)dv.
	\end{split}
	\end{align}
	Now,  since $p_H(t,x)$ is Lipschitz, we can use the complex inversion theorem (see \cite[Theorem $2.3.4$]{wolfgang2002vector}) together with Equation \eqref{eq:Laptrpprime}  to state that
	\begin{equation*}
	\lim_{R_2 \to +\infty}\frac{1}{2\pi}\int_{-R_2}^{R_2}e^{(c_2+iv)t}\overline{p}_H(c_2+iv,x)dv=p_H(t,x).
	\end{equation*}
	Hence, taking the limit as $R_2 \to +\infty$ in equation \eqref{eq:intpass1} we have
	\begin{align*}
	\begin{split}
	\lim_{R_2 \to +\infty}\lim_{R_1 \to +\infty}I(R_1,R_2)=4\pi^2V'_{2,H}(t)p_H(t,x).
	\end{split}
	\end{align*}
	Now, taking the Laplace transform on both sides, we conclude the proof.
\end{proof}
Now let us observe that, from Proposition \ref{prop1}, we have
\begin{equation*}
p_H^\Psi(t,x)=\int_0^{+\infty}p_H(y,x)f_E(t,y)dy,
\end{equation*}
and then, taking the Laplace transform, for any $\lambda \in \mathbf{H}^*$ we have
{\small \begin{equation}\label{eq:Laptransp1}
\overline{p}_H^\Psi(\lambda,x):=\cL[p_H^\Psi(\cdot,x)](\lambda)=\int_0^{+\infty}p_H(y,x)\frac{\Psi(\lambda)}{\lambda}e^{-y\Psi(\lambda)}dy=\frac{\Psi(\lambda)}{\lambda}\overline{p}_H(\Psi(\lambda),x).
\end{equation}}
Let us observe that since $\Psi$ is a Bernstein function, then for $\lambda \in \mathbf{R}$ with $\lambda>0$ we have $\Psi'(\lambda)\ge 0$. However, $\Psi$ can be extended uniquely as an holomoprhism to $\mathbf{H}^*$, hence the zeros of $\Psi'$ are isolated (and in particular create at most a countable set). This means in particular that if $\Psi'(\lambda)=0$ for some $\lambda \in \mathbf{R}$ with $\lambda>0$, then there exists a neighborhood $U$ of $\lambda$ such that $\Psi'(\xi)>0$ for any $\xi \in U \setminus\{\lambda\}$, hence $\Psi$ is strictly increasing on the positive real axis.  In particular, since $\Psi'$ is holomorphic and completely monotone on the positive real axis, $\Psi''(\lambda)\le 0$ for any $\lambda>0$ and its zeros are isolated. Hence $\Psi'$ is strictly decreasing and then, being $\Psi'(\lambda)\ge 0$ for $\lambda>0$, it cannot achieve $0$. This guarantees invertibility of $\Psi$ over the positive real axis, but it is not enough to guarantee invertibility of $\Psi$ all over $\mathbf{H}^*$. However, since the set  of the zeros $Z$ of $\Psi'$ is at most countable, then $\Psi$ is local invertible in any $\lambda \in \mathbf{H}^*\setminus Z$. Fix then $\lambda \in \mathbf{H}^*\setminus Z$ and consider a local inverse of $\Psi$ (let us denote it by $\Psi^{-1}$). From \eqref{eq:Laptransp1} we have that
\begin{equation}\label{eq:Laptransp2}
\overline{p}_H(\lambda,x)=\frac{\Psi^{-1}(\lambda)}{\lambda}\overline{p}_H^\Psi(\Psi^{-1}(\lambda),x).
\end{equation}
In particular the quantity $\frac{\Psi^{-1}(\lambda)}{\lambda}\overline{p}_H^\Psi(\Psi^{-1}(\lambda),x)$ is independent from the choice of the local inverse map of $\Psi$.\par
Now let us define another operator.  Fix $c_1<0<c_2$ such that $c_1-c_2>-\frac{1}{\theta}$, as done in Proposition \ref{proposition:59}.  Consider $v:\mathbf{H}^* \times I \to \mathbf{R}$ where $I \subseteq \mathbf{R}$. Then we say that $v \in \mathcal{D}(\hat{L},I)$ if for all $\lambda \in \mathbf{H}^*$, $x \in I$ and  for any $z \in \mathbf{R}$ such that $c_2+iz \not \in Z$, the quantity $\frac{\Psi^{-1}(c_2+iz)}{c_2+iz}v(\Psi^{-1}(c_2+iz),x)$ is independent from the choice of the local inverse map $\Psi^{-1}$ of $\Psi$ in $c_2+iz$ and
{\small \begin{align*}
\begin{split}
\hat{L}v(\lambda&,x):=\frac{1}{4\pi^2}\int_{0}^{+\infty}e^{-\Psi(\lambda) t}\lim_{R_2 \to +\infty}\int_{-\infty}^{+\infty}e^{(c_1+iw)t}\times \\&\int_{-R_2}^{R_2}\cL[V'_{2,H}(\cdot)](c_1-c_2+i(w-z))\frac{\Psi^{-1}(c_2+iz)}{c_2+iz}v(\Psi^{-1}(c_2+iz),x)dzdwdt
\end{split}
\end{align*}}
is well defined, i.e., is  some complex number.
Recall that the  integrand  is well posed since $\frac{\Psi^{-1}(c_2+iz)}{c_2+iz}v(\Psi^{-1}(c_2+iz),x)$ is well defined except for $z \in \mathbf{R}$ such that $c_2+iz \in Z$, which is at most a countable (and then of null measure) set.
Now we are ready to prove the following lemma.
\begin{lem}
	$\overline{p}_H^\Psi \in \mathcal{D}(\hat{L},\mathbf{R}^*)$ and
	\begin{equation}\label{eq:hatLpH}
	\hat{L}\overline{p}_H^\Psi(\lambda,x)=Lp_H(\Psi(\lambda),x).
	\end{equation}
\end{lem}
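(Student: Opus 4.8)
The plan is to derive the identity essentially for free by combining the integral representation of $Lp_H$ from Proposition~\ref{proposition:59} with the change-of-variable formula \eqref{eq:Laptransp2}. First I would verify the membership condition for $\mathcal{D}(\hat{L},\mathbf{R}^*)$. Fix $x\in\mathbf{R}^*$ and take $z\in\mathbf{R}$ with $c_2+iz\notin Z$, so that $c_2+iz\in\mathbf{H}^*\setminus Z$. Evaluating \eqref{eq:Laptransp2} at $\lambda=c_2+iz$ gives
\begin{equation*}
\frac{\Psi^{-1}(c_2+iz)}{c_2+iz}\,\overline{p}_H^\Psi\bigl(\Psi^{-1}(c_2+iz),x\bigr)=\overline{p}_H(c_2+iz,x),
\end{equation*}
whose right-hand side is independent of the chosen local inverse of $\Psi$; this is precisely the branch-independence demanded in the definition of $\hat{L}$, already noted in the remark after \eqref{eq:Laptransp2}.

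Second, I would feed this identity into the iterated integral defining $\hat{L}\overline{p}_H^\Psi(\lambda,x)$. Because $Z$ is at most countable, its trace on the line $\{c_2+iz:z\in\mathbf{R}\}$ is Lebesgue-null, so on the set where the $\hat{L}$-integrand is defined it agrees, for almost every $z$, with $\cL[V'_{2,H}(\cdot)]\bigl(c_1-c_2+i(w-z)\bigr)\,\overline{p}_H(c_2+iz,x)$. Consequently the inner integrals over $[-R_2,R_2]$, their limits as $R_2\to+\infty$, and the subsequent integrations in $w$ and $t$ are unchanged, and we obtain
\begin{align*}
\hat{L}\overline{p}_H^\Psi(\lambda,x)&=\frac{1}{4\pi^2}\int_0^{+\infty}e^{-\Psi(\lambda)t}\lim_{R_2\to+\infty}\int_{-\infty}^{+\infty}e^{(c_1+iw)t}\\
&\quad\times\int_{-R_2}^{R_2}\cL[V'_{2,H}(\cdot)]\bigl(c_1-c_2+i(w-z)\bigr)\overline{p}_H(c_2+iz,x)\,dz\,dw\,dt.
\end{align*}

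Third, I would identify the right-hand side. Since $\Psi$ is a non-trivial Bernstein function, $\Re(\Psi(\lambda))\ge\Psi(\Re(\lambda))>0$ for every $\lambda\in\mathbf{H}^*$, so $\Psi(\lambda)\in\mathbf{H}^*$; hence Proposition~\ref{proposition:59} applies with $\lambda$ replaced by $\Psi(\lambda)$ (renaming the dummy variable $v$ as $z$), and the displayed right-hand side equals $L(p_H)(\Psi(\lambda),x)$. By Lemma~\ref{lemma:deriv}~$(iii)$, $p_H\in\mathcal{D}(L,\mathbf{R})$, so $L(p_H)(\Psi(\lambda),x)$ is a genuine complex number; this simultaneously shows $\overline{p}_H^\Psi\in\mathcal{D}(\hat{L},\mathbf{R}^*)$ and establishes \eqref{eq:hatLpH}.

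There is no real analytic obstacle here: every convergence issue (the Paley--Wiener identity, the complex inversion formula, the dominated-convergence exchange of limits) was already settled inside the proof of Proposition~\ref{proposition:59}, and what remains is bookkeeping. The only two points deserving a careful line are the correct use of \eqref{eq:Laptransp2} — which at once provides the branch-independence of the integrand and the substitution converting the $\hat{L}$-integral into the $L$-integral — and the verification that $\Psi$ maps $\mathbf{H}^*$ into $\mathbf{H}^*$, so that Proposition~\ref{proposition:59} may legitimately be invoked at $\Psi(\lambda)$.
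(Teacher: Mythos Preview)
Your proposal is correct and follows essentially the same route as the paper: use \eqref{eq:Laptransp2} to secure branch-independence and to replace the $\hat{L}$-integrand by $\overline{p}_H(c_2+iz,x)$, then recognize the resulting expression as the right-hand side of \eqref{eq:LapcarL} evaluated at $\Psi(\lambda)$. Your write-up is slightly more explicit than the paper's, in that you spell out why $\Psi(\lambda)\in\mathbf{H}^*$ and invoke Lemma~\ref{lemma:deriv}~$(iii)$ for finiteness, but these are elaborations of the same argument rather than a different approach.
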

\begin{proof}
	Let us just prove equation \eqref{eq:hatLpH}: the fact that $\overline{p}_H^\Psi \in   \mathcal{D}(\hat{L},\mathbf{R})$ will follow from that.  First of all, recall that from \eqref{eq:Laptransp2} we have that $\frac{\Psi^{-1}(c_2+iv)}{c_2+iv}\overline{p}_H^\Psi(\Psi^{-1}(c_2+iv),x)$ is defined independently from the choice of the local inverse $\Psi^{-1}$ of $\Psi$ on the imaginary line $\Re(\lambda)=c_2$. Moreover, local integrability of $\frac{\Psi^{-1}(c_2+iv)}{c_2+iv}\overline{p}_H^\Psi(\Psi^{-1}(c_2+iv),x)$ follows from the local integrability of $\overline{p}_H(c_2+iv,x)$.  Thus, we have  from equations \eqref{eq:Laptransp2} and \eqref{eq:LapcarL} that
	{\small \begin{align*}
	\hat{L}\overline{p}_H^\Psi&(\lambda,x)=\frac{1}{4\pi^2}\int_{0}^{+\infty}e^{-\Psi(\lambda) t}\lim_{R_2 \to +\infty}\int_{-\infty}^{+\infty}e^{(c_1+iw)t}\times \\&\int_{-R_2}^{R_2}\cL[V'_{2,H}(\cdot)](c_1-c_2+i(w-v))\overline{p}_H(c_2+iv,x)dvdwdt=Lp_H(\Psi(\lambda),x).
	\end{align*}}
\end{proof}

\section{The generalized Fokker-Plack equation for the pdf of $U_H^\Psi(t)$}\label{gFP}

In this section we want to show that $p_H^\Psi(t,x)$ is a solution (in the  sense that we will specify later) of a generalized Fokker-Planck equation.\par
Let us consider the L\'evy measure $\nu$ of the subordinator $\sigma$ and let us define $\nu_\infty(t)=\nu(t,+\infty)$ for $t>0$. The well-definition of such function is given by \eqref{intest}. Indeed, for $t>1$,
\begin{equation*}
\nu(t,+\infty)=\int_{t}^{+\infty}\nu(dx)=\int_{t}^{+\infty}(1 \wedge x)\nu(dx)\le \int_{0}^{+\infty}(1 \wedge x)\nu(dx)<+\infty
\end{equation*}
Instead, for $t \in (0,1)$, we have $\int_1^{+\infty}\nu(dx)<+\infty$ as before, while
\begin{equation*}
\int_{t}^{1}\nu(dx)=\int_{t}^{1}\frac{x}{x}\nu(dx)\le \frac{1}{t}\int_{t}^{1}x\nu(dx)=\frac{1}{t}\int_{t}^{1}(1 \wedge x)\nu(dx)<+\infty.
\end{equation*}
As done in \cite{gajda2014fokker,toaldo2015convolution}, let us define the generalized Caputo derivative of a function $v$ in a certain function space as
\begin{equation*}
\partial_t^\Psi v(t)=\der{}{t}\int_0^t\nu_\infty(t-\tau)(v(\tau)-v(0))d\tau.
\end{equation*}
Let us denote with $\mathcal{D}(\partial_t^\Psi)$ the domain of such operator. Recall that if we consider $\sigma$ to be an $\alpha$-stable subordinator for $\alpha \in (0,1)$, then the generalized Caputo derivative is actually the classical Caputo derivative of order $\alpha$. In general one can consider the fractional order of the generalized Caputo derivative depending on the asymptotic behaviour of the Levy measure $\nu$. For instance if we consider a relativistic $\alpha$-stable subordinator for $\alpha \in (0,1)$, then the memory kernel $\nu_\infty$ acts as a power of order $\alpha$ for short times and like a decreasing exponential for long times.\par
In particular, let us recall that, for $\lambda \in \mathbf{H}^*$, $\cL[\nu_\infty(\cdot)](\lambda)=\frac{\Psi(\lambda)}{\lambda}$,
and for any $v \in \mathcal{D}(\partial_t^\Psi)$ such that $\overline{v}(\lambda):=\cL[v(\cdot)](\lambda)$ and $\cL[\partial_t^\Psi v(\cdot)](\lambda)$ are defined for $\lambda \in \mathbf{H}^*$, it holds that
\begin{equation*}
\cL[\partial_t^\Psi v(\cdot)](\lambda)=\Psi(\lambda)\overline{v}(\lambda)-\frac{\Psi(\lambda)}{\lambda}v(0).
\end{equation*}
Finally, let us also recall that if $v \in C^1(0,+\infty)$, then
\begin{equation*}
\partial_t^\Psi v(t)=\int_0^t\nu_\infty(t-\tau)\der{v}{t}(\tau)d\tau.
\end{equation*}
Now, consider a function $v:[0,+\infty)\times I \mapsto \mathbf{R}$ such that the Laplace transform $\overline{v}(\lambda,x):=\cL[v(\cdot,x)](\lambda)$ is well defined for $x \in I$ and $\lambda \in \mathbf{H}^*$. Suppose moreover that $\overline{v}(\lambda,x) \in \mathcal{D}(\hat{L},I)$ and $\pdsup{}{x}{2}\hat{L}\overline{v}(\lambda,x)$ is well defined for any $x \in I$ and $\lambda \in \mathbf{H}^*$. Finally, suppose that $\pdsup{}{x}{2}\hat{L}\overline{v}(\lambda,x)$ is the Laplace transform of some $L^\infty$ function. On such function we can define the operator
\begin{equation*}
Gv(t,x)=\left[\nu_\infty \ast \cL^{-1}_{\lambda \to t}\left[\pdsup{}{x}{2}\hat{L}\overline{v}(\lambda,x)\right]\right](t);
\end{equation*}
let us denote with $\mathcal{D}(G,I)$ the domain of $G$.\par
Now we are in position to prove that $p_H^\Psi$ is a solution, in some sense, of the following equation
\begin{equation}\label{eq:genFP}
\partial_t^\Psi v(t,x)=\frac{1}{2}Gv(t,x), \quad (t,x)\in (0,+\infty)\times I
\end{equation}
for some $I \subseteq \mathbf{R}$. Following the lines of \cite{gajda2015time,meerschaert2011distributed}, we will first introduce a weaker notion of solution and then a stronger one.
\begin{defn}
	We say that $v:(0,+\infty)\times I \to \mathbf{R}$ with $I \subset \mathbf{R}$ is a \textit{mild solution} of equation \eqref{eq:genFP} if
	\begin{itemize}
		\item $v$ admits a Laplace transform $\overline{v}(\lambda,x)=\cL[v(\cdot,x)](\lambda)$ for $\lambda \in \mathbf{H}^*$;
		\item its Laplace transform $\overline{v}(\lambda,x)$ is solution of
		\begin{equation}\label{eq:genLFP}
		\Psi(\lambda)\overline{v}(\lambda,x)-\frac{\Psi(\lambda)}{\lambda}v(0,x)=\frac{\Psi(\lambda)}{2\lambda}\pdsup{}{x}{2}\hat{L}\overline{v}(\lambda,x), \ \lambda \in \mathbf{H}^*, \ x \in I.
		\end{equation}
	\end{itemize}
\end{defn}
\begin{defn}
	We say that $v:(0,+\infty)\times I \to \mathbf{R}$ with $I \subseteq \mathbf{R}$ is a \textit{classical solution} of equation \eqref{eq:genFP} if
	\begin{itemize}
		\item $v \in \mathcal{D}(G,I)$;
		\item $v(\cdot,x) \in \mathcal{D}(\partial_t^\Psi)$ for any $x \in I$;
		\item for any $x \in I$ the identity \eqref{eq:genFP} holds for almost any $t \in (0,+\infty)$.
	\end{itemize}
\end{defn}
Practically a mild solution is a \textit{weaker} solution of equation \eqref{eq:genFP}. Indeed, we say that $v$ is a mild solution if it is Laplace transformable and its Laplace transform solves equation \eqref{eq:genLFP}, that is the family of second order ordinary differential equations (in $x$) obtained from \eqref{eq:genFP} by formally applying the Laplace transform in $t$ on both sides of the equation. In particular if $v$ is a classical solution of \eqref{eq:genFP} in $(0,+\infty)\times I$, then it is Laplace transformable (since $v \in \mathcal{D}(G,I)$). Thus, if $\partial^\Phi_t v$ is Laplace transformable, we can really apply the Laplace transform in $t$ on both sides of \eqref{eq:genFP} obtaining Equation \eqref{eq:genLFP}. This means that if $v$ is a classical solution of \eqref{eq:genFP} and $\partial^\Phi_t v$ is Laplace transformable, then $v$ is also a mild solution of \eqref{eq:genLFP}.\par 
Now let us show in which sense $p_H^\Psi(t,x)$ is solution of \eqref{eq:genFP}.
\begin{thm}
	$p_H^\Psi(t,x)$ is a mild solution of equation \eqref{eq:genFP} in \linebreak ${(0,+\infty)\times \mathbf{R}^*}$.
\end{thm}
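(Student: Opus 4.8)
The plan is to verify directly the two requirements in the definition of mild solution, working throughout on the Laplace-transform side so that the explicit formulas collected in Sections~\ref{sec5} and~\ref{sec6} can be chained together. First, one has to know that $t\mapsto p_H^\Psi(t,x)$ is Laplace transformable for $\lambda\in\mathbf{H}^*$ and $x\in\mathbf{R}^*$: this is the content of the Corollary following Lemma~\ref{lemma:contr}, and moreover \eqref{eq:Laptransp1} gives the explicit identity $\overline{p}_H^\Psi(\lambda,x)=\frac{\Psi(\lambda)}{\lambda}\overline{p}_H(\Psi(\lambda),x)$; here one uses that $\Psi$ maps $\mathbf{H}^*$ into $\mathbf{H}^*$, since $\Re\Psi(\lambda)\ge\Psi(\Re\lambda)>0$ for $\Re\lambda>0$. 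It remains to identify the initial value $p_H^\Psi(0,x)$. Since $p_H(0,x)=0$ for $x\in\mathbf{R}^*$ (as noted in Lemma~\ref{lemma:contr}) and $y\mapsto p_H(y,x)$ is bounded, dominated convergence in $p_H^\Psi(t,x)=\int_0^{+\infty}p_H(y,x)f_E(t,y)\,dy$, together with $E(t)\to 0$ a.s. as $t\to 0$, yields $p_H^\Psi(0,x)=0$ for $x\in\mathbf{R}^*$. Hence \eqref{eq:genLFP} for $v=p_H^\Psi$ collapses, after clearing the factor $\frac{\Psi(\lambda)}{2\lambda}$, to the single identity
\[
\pdsup{}{x}{2}\hat{L}\overline{p}_H^\Psi(\lambda,x)=2\lambda\,\overline{p}_H^\Psi(\lambda,x),\qquad \lambda\in\mathbf{H}^*,\ x\in\mathbf{R}^*,
\]
and this is what I would prove.

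To establish it I would use \eqref{eq:hatLpH}, which gives $\hat{L}\overline{p}_H^\Psi(\lambda,x)=Lp_H(\Psi(\lambda),x)$, and then Lemma~\ref{lemma:deriv}$(vi)$, which lets the operators $\pdsup{}{x}{2}$ and $L$ be interchanged on $\mathbf{R}^*$, so that
\[
\pdsup{}{x}{2}\hat{L}\overline{p}_H^\Psi(\lambda,x)=L\!\left(\pdsup{p_H}{x}{2}\right)(\Psi(\lambda),x)=\int_0^{+\infty}e^{-\Psi(\lambda)s}V'_{2,H}(s)\,\pdsup{p_H}{x}{2}(s,x)\,ds.
\]
Next comes the one elementary new input: the Gaussian kernel \eqref{density} with variance $V_{2,H}(t)$ satisfies $\pd{p_H}{t}(t,x)=\frac{1}{2}V'_{2,H}(t)\,\pdsup{p_H}{x}{2}(t,x)$, which follows by the chain rule from the fact that a centered Gaussian density of variance $V$ solves $\partial_V p=\frac{1}{2}\partial_x^2 p$. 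Substituting this, the integral above becomes $2\int_0^{+\infty}e^{-\Psi(\lambda)s}\pd{p_H}{t}(s,x)\,ds$, which by \eqref{eq:Laptrpprime}, evaluated at the point $\mu=\Psi(\lambda)\in\mathbf{H}^*$, equals $2\Psi(\lambda)\,\overline{p}_H(\Psi(\lambda),x)$. Finally, rewriting $\overline{p}_H(\Psi(\lambda),x)=\frac{\lambda}{\Psi(\lambda)}\overline{p}_H^\Psi(\lambda,x)$ from \eqref{eq:Laptransp1} gives exactly $2\lambda\,\overline{p}_H^\Psi(\lambda,x)$, which is the desired identity; feeding it back into \eqref{eq:genLFP} and using $p_H^\Psi(0,x)=0$ shows that $\overline{p}_H^\Psi$ solves \eqref{eq:genLFP}, so $p_H^\Psi$ is a mild solution on $(0,+\infty)\times\mathbf{R}^*$.

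I do not anticipate a real obstacle: once \eqref{eq:hatLpH}, Lemma~\ref{lemma:deriv}$(vi)$, \eqref{eq:Laptrpprime} and \eqref{eq:Laptransp1} are in hand, the proof is essentially bookkeeping. The points that need slight extra care are (a) confirming $\Psi(\mathbf{H}^*)\subseteq\mathbf{H}^*$, so that all formulas originally proved on $\mathbf{H}^*$ may legitimately be evaluated at the argument $\Psi(\lambda)$ and so that $\overline{p}_H^\Psi\in\mathcal{D}(\hat L,\mathbf{R}^*)$ may be invoked; (b) the identification $p_H^\Psi(0,x)=0$, which is precisely where the exclusion of $x=0$ is used, since $p_H(0,0)=+\infty$; and (c) the legitimacy of exchanging $\pdsup{}{x}{2}$ with the $L$-operator, which is already supplied by Lemma~\ref{lemma:deriv}$(vi)$. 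No hypothesis on $f_E$ beyond its mere existence enters.
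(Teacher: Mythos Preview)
Your proposal is correct and follows essentially the same route as the paper: both arguments combine \eqref{eq:hatLpH} with Lemma~\ref{lemma:deriv}$(vi)$ to rewrite $\pdsup{}{x}{2}\hat L\overline p_H^\Psi$ as $L(\pdsup{p_H}{x}{2})(\Psi(\lambda),x)$, invoke the ordinary Fokker--Planck equation $\pd{p_H}{t}=\tfrac12 V'_{2,H}\pdsup{p_H}{x}{2}$ for the Gaussian kernel, take Laplace transforms in $t$, substitute $\Psi(\lambda)$ for $\lambda$, and then use \eqref{eq:Laptransp1} together with $p_H^\Psi(0,x)=p_H(0,x)=0$ on $\mathbf R^*$ to land on \eqref{eq:genLFP}. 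Your write-up merely reorders the steps (reducing first to the single identity $\pdsup{}{x}{2}\hat L\overline p_H^\Psi=2\lambda\,\overline p_H^\Psi$) and is slightly more explicit about the side conditions $\Psi(\mathbf H^*)\subseteq\mathbf H^*$ and $p_H^\Psi(0,x)=0$; one small slip is that the Corollary after Lemma~\ref{lemma:contr} is stated for $\overline p_H$, not $\overline p_H^\Psi$, but the same bound from Lemma~\ref{lemma:contr} gives the Laplace transformability of $p_H^\Psi$ immediately.
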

\begin{proof}
	First of all, let us observe that we have $\overline{p}_H^\Psi \in \mathcal{D}(\hat{L},\mathbf{R}^*)$. Moreover, for $x \in \mathbf{R}^*$ we have, by using equation \eqref{eq:hatLpH} and Lemma \ref{lemma:deriv} that
	\begin{equation*}
	\pdsup{}{x}{2}\hat{L}\overline{p}_H^\Psi(\lambda,x)=\pdsup{}{x}{2}Lp_H(\Psi(\lambda),x)=L\left(\pdsup{p_H}{x}{2}\right)(\Psi(\lambda),x).
	\end{equation*}
	Now let us recall that $p_H(t,x)$ is a solution of the following equation:
	\begin{equation*}
	\pd{p_H}{t}(t,x)=\frac{1}{2}V'_{2,H}(t)\pdsup{p_H}{x}{2}(t,x).
	\end{equation*}
	Now, since we have shown in Lemma \ref{lemma:deriv} that the right hand side of this equation is transformable for $x \not = 0$, let us consider its Laplace transform   for $\lambda \in \mathbf{H}^*$ and $x \in \mathbf{R}^*$:
	\begin{equation*}
	\lambda \overline{p}_H(\lambda,x)-p_H(0,x)=\frac{1}{2}\pdsup{}{x}{2}Lp_H(\lambda,x).
	\end{equation*}
	Substituting $\Psi(\lambda)$ in place of $\lambda$ we have  from equation \eqref{eq:hatLpH} that 
	\begin{equation*}
	\Psi(\lambda)\overline{p}_H(\Psi(\lambda),x)-p_H(0,x)=\frac{1}{2}\pdsup{}{x}{2}Lp_H(\Psi(\lambda),x)=\frac{1}{2}\pdsup{}{x}{2}\hat{L}\overline{p}^\Psi_H(\lambda,x).
	\end{equation*}
	Now let us use equation \eqref{eq:Laptransp1} to obtain
	\begin{equation*}
	\lambda\overline{p}_H^\Psi(\lambda,x)-p_H(0,x)=\frac{1}{2}\pdsup{}{x}{2}\hat{L}\overline{p}^\Psi_H(\lambda,x).
	\end{equation*}
	Now, since $E(0)=0$ almost surely, we have $p_H(0,x)=p_H^\Psi(0,x)$ and then
	\begin{equation*}
	\lambda\overline{p}_H^\Psi(\lambda,x)-p_H^\Psi(0,x)=\frac{1}{2}\pdsup{}{x}{2}\hat{L}\overline{p}^\Psi_H(\lambda,x).
	\end{equation*}
	Finally, multiplying by $\frac{\Psi(\lambda)}{\lambda}$, we have
	\begin{equation*}
	\Psi(\lambda)\overline{p}_H^\Psi(\lambda,x)-\frac{\Psi(\lambda)}{\lambda}p_H^\Psi(0,x)=\frac{\Psi(\lambda)}{2\lambda}\pdsup{}{x}{2}\hat{L}\overline{p}^\Psi_H(\lambda,x).
	\end{equation*}
\end{proof}
Now we want to investigate under which hypotheses $p^\Psi_H(t,x)$ is also a classical solution of \eqref{eq:genFP} in $(0,+\infty)\times \mathbf{R}^*$.
To do this, let us introduce the integrated tail of the L\'evy measure, defined as
\begin{equation*}
I(x)=\int_0^x \nu_\infty(y)dy.
\end{equation*}
{In particular we have, by using Fubini's theorem,
	\begin{align*}
	\int_0^x \nu_\infty(y)dy&=\int_0^x \int_y^{+\infty}\nu(dz)dy\\&=\int_0^x \int_y^{x}\nu(dz)dy+\int_0^x \int_x^{+\infty}\nu(dz)dy\\&=\int_0^x z \nu(dz)+x\nu_\infty(x)<+\infty
	\end{align*}
	hence $I(x)$ is well defined.}\par
Let us show some asymptotic properties of such function.
\begin{lem}\label{lemma:asymI}
	The following properties hold:
	\begin{itemize}
		\item[$i$] For any $\lambda \in \mathbf{H}^*$ we have $\lim_{x \to +\infty}e^{-\lambda x}I(x)=0$;
		\item[$i'$] We have $\lim_{x \to +\infty}\frac{I(x)}{x}=0$;
		\item[$ii$] We have $\lim_{x \to 0}I(x)=0$.
	\end{itemize}
\end{lem}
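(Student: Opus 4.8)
The plan is to prove the three asymptotic relations for $I(x)=\int_0^x\nu_\infty(y)dy$ by exploiting the integral representation $I(x)=\int_0^x z\,\nu(dz)+x\nu_\infty(x)$ established just before the lemma, together with the integrability condition \eqref{intest} and the monotonicity of $\nu_\infty$. I will treat the easy items first and spend most of the effort on $i'$, which I expect to be the delicate one.

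For item $ii$, observe that both summands in $I(x)=\int_0^x z\,\nu(dz)+x\nu_\infty(x)$ tend to $0$ as $x\to 0$. The first does so because $z\mapsto z$ is $\nu$-integrable near the origin (that is exactly $\int_0^1(1\wedge x)\nu(dx)<+\infty$) and the integrals $\int_0^x z\,\nu(dz)$ are the tails of a convergent integral. For the second term, note $x\nu_\infty(x)\le \int_0^x z\,\nu(dz)+\int_x^1 z\,\nu(dz)=\int_0^1 z\,\nu(dz)$... more precisely $x\nu_\infty(x)=x\int_x^{+\infty}\nu(dz)$; splitting at $1$, the part $x\int_1^{+\infty}\nu(dz)\to 0$ trivially since $\nu(1,+\infty)<+\infty$, and $x\int_x^1\nu(dz)\le\int_x^1 z\,\nu(dz)\to 0$ again by dominated convergence. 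Hence $\lim_{x\to 0}I(x)=0$.

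For item $i$, since $\nu_\infty$ is nonincreasing and $\nu_\infty(1,+\infty)<+\infty$, we have the crude bound $I(x)\le I(1)+(x-1)\nu_\infty(1)$ for $x\ge 1$, so $I(x)$ grows at most linearly; multiplying by $e^{-\lambda x}$ with $\Re(\lambda)>0$ and letting $x\to+\infty$ gives $i$ at once, because $e^{-\Re(\lambda)x}(a+bx)\to 0$.

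The main obstacle is item $i'$: linear growth of $I$ only yields $\limsup_x I(x)/x\le\nu_\infty(1)$, which need not be $0$ (e.g. for $\alpha$-stable $\nu$, $\nu_\infty(1)>0$). The correct argument is that $\nu_\infty(x)\to 0$ as $x\to+\infty$: indeed $\nu_\infty(x)=\int_x^{+\infty}\nu(dz)\le\int_x^{+\infty}(1\wedge z)\nu(dz)\to 0$ as the tail of the convergent integral \eqref{intest}. Then $I(x)/x=\frac1x\int_0^x\nu_\infty(y)dy$ is a Cesàro average of a function converging to $0$, hence tends to $0$; concretely, fix $\varepsilon>0$, choose $M$ with $\nu_\infty(y)<\varepsilon$ for $y\ge M$, and split $\frac1x\int_0^x\nu_\infty = \frac1x\int_0^M\nu_\infty+\frac1x\int_M^x\nu_\infty\le\frac{I(M)}{x}+\varepsilon$, which is $<2\varepsilon$ for $x$ large. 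This proves $i'$ and completes the lemma.
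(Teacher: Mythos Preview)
Your argument is essentially correct, and for items $i$ and $i'$ it coincides with the paper's reasoning: the paper simply writes $i'$ as a one-line l'H\^opital computation $\lim_{x\to\infty} I(x)/x=\lim_{x\to\infty}\nu_\infty(x)=0$, which is exactly your Ces\`aro-average argument, and does the same for $i$ (where you instead use a direct linear bound on $I$). The genuine difference is item $ii$: the paper invokes a nontrivial estimate from Bertoin, $I(1/x)\le C\,\Psi(x)/x$, combined with the driftless-subordinator fact $\Psi(x)/x\to 0$, whereas you argue directly from the decomposition $I(x)=\int_0^x z\,\nu(dz)+x\nu_\infty(x)$. Your route is more elementary and self-contained; the paper's route links the asymptotics of $I$ to the Laplace exponent $\Psi$.

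There is one slip in your treatment of item $ii$: you assert $\int_x^1 z\,\nu(dz)\to 0$ as $x\to 0$, but in fact this integral increases to $\int_0^1 z\,\nu(dz)$, which is finite but generally nonzero. Thus your bound $x\int_x^1\nu(dz)\le\int_x^1 z\,\nu(dz)$ only gives boundedness, not vanishing. The quickest fix is to observe that item $ii$ is immediate once $I(x)<+\infty$ has been established (as the paper does just before the lemma): this says $\nu_\infty\in L^1(0,a)$ for every $a>0$, so $I(x)=\int_0^x\nu_\infty\to 0$ by absolute continuity of the Lebesgue integral. If you prefer to repair the splitting argument, insert an intermediate scale: for $0<x<\varepsilon<1$ write $x\int_x^1\nu(dz)\le x\,\nu(\varepsilon,1)+\int_0^\varepsilon z\,\nu(dz)$, let $x\to 0$, then $\varepsilon\to 0$.
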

\begin{proof}
	Let us recall that since $\int_0^{+\infty}(1 \wedge x)\nu(dx)<+\infty$ we have that $\nu_\infty(x)<+\infty$ for any $x>1$. Without loss of generality, we can suppose that $\lambda \in \mathbf{R}$ with $\lambda>0$. Hence we have
	\begin{equation*}
	\lim_{x \to +\infty}\frac{I(x)}{e^{\lambda x}}=\lim_{x \to +\infty}\frac{\nu_\infty(x)}{\lambda e^{\lambda x}}=0.
	\end{equation*}
	Moreover, since $\lim_{x \to +\infty}\nu_\infty(x)=0$, we have
	\begin{equation*}
	\lim_{x \to +\infty}\frac{I(x)}{x}=\lim_{x \to +\infty}\nu_\infty(x)=0.
	\end{equation*}
	Now recall from \cite[Section $3.1$, Proposition $1$]{bertoin1996Levy} that there exists a constant $C$ such that
	\begin{equation*}
	I\left(\frac{1}{x}\right)\le C\frac{\Psi(x)}{x}.
	\end{equation*}
	Moreover, from \cite[Remark $3.3$]{schilling2012bernstein} and the fact that $\sigma$ is a driftless subordinator, we know that $\lim_{x \to +\infty}\frac{\Psi(x)}{x}=0$. Hence we have
	\begin{equation*}
	\limsup_{x \to +\infty}I\left(\frac{1}{x}\right)\le C \lim_{x \to +\infty}\frac{\Psi(x)}{x}=0
	\end{equation*}
	concluding the proof.
\end{proof}
To give some condition under which $p_H^\Psi(t,x)$ is a classical solution of \eqref{eq:genFP}, we need to introduce a particular function space that is the Widder space (\cite{wolfgang2002vector})
\begin{equation*}
C_W(0,+\infty)=\left\{f \in C^\infty(0,+\infty): \ \Norm{f}{W}<+\infty\right\}
\end{equation*}
where
\begin{equation*}
\Norm{f}{W}=\sup_{\lambda>0, \ k \in \N}\frac{\lambda^{k+1}}{k!}|f^{(k)}(\lambda)|<+\infty.
\end{equation*}
For a function $f:\mathbf{H}^* \to \mathbf{R}$, we say that $f$ belongs to the Widder space $C_W(0,+\infty)$ if and only if its restriction to the real line $\mathbf{R}_+$ belongs to $C_W(0,+\infty)$.\par
A classical result (see \cite{wolfgang2002vector} also for generalization) shows that the Laplace transform is an isometric isomorphism between $L^\infty(0,+\infty)$ and $C_W(0,+\infty)$.\par
Now we are ready to prove the following result.
\begin{prop}
	Suppose that for any $x \in \mathbf{R}^*$ the function \linebreak $\pdsup{}{x}{2}\hat{L}\overline{p}_H^\Psi(\cdot,x)$ belongs to the Widder space $C_W(0,+\infty)$. Then $p_H^\Psi(t,x)$ is a classical solution of Equation \eqref{eq:genFP} for $(t,x)\in (0,+\infty)\times \mathbf{R}^*$.
\end{prop}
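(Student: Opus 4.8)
The plan is to promote the mild solution to a classical one by inverting, in the variable $t$, the Laplace-transformed equation \eqref{eq:genLFP}, using the Widder-space hypothesis to produce a genuine $L^\infty$ inverse. First I would invoke the classical result recalled above, that $\cL$ is an isometric isomorphism from $L^\infty(0,+\infty)$ onto $C_W(0,+\infty)$: since by hypothesis $\pdsup{}{x}{2}\hat{L}\overline{p}_H^\Psi(\cdot,x)\in C_W(0,+\infty)$ for each $x\in\mathbf{R}^*$, there exists $g(\cdot,x)\in L^\infty(0,+\infty)$ with $\cL[g(\cdot,x)](\lambda)=\pdsup{}{x}{2}\hat{L}\overline{p}_H^\Psi(\lambda,x)$. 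By the very definition of $G$ this yields at once $p_H^\Psi\in\mathcal{D}(G,\mathbf{R}^*)$ with $Gp_H^\Psi(t,x)=(\nu_\infty\ast g(\cdot,x))(t)$, which settles the first item in the definition of classical solution.

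Next I would rewrite the mild-solution identity \eqref{eq:genLFP}. Using $\cL[\nu_\infty(\cdot)](\lambda)=\Psi(\lambda)/\lambda$ and the definition of $g$, its right-hand side equals $\tfrac12\cL[Gp_H^\Psi(\cdot,x)](\lambda)$, so, setting $h(t,x):=\tfrac12\,Gp_H^\Psi(t,x)=\tfrac12(\nu_\infty\ast g(\cdot,x))(t)$, equation \eqref{eq:genLFP} reads $\Psi(\lambda)\overline{p}_H^\Psi(\lambda,x)-\tfrac{\Psi(\lambda)}{\lambda}p_H^\Psi(0,x)=\cL[h(\cdot,x)](\lambda)$. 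Dividing by $\lambda$ and comparing with the Laplace transform of $t\mapsto\int_0^t\nu_\infty(t-\tau)(p_H^\Psi(\tau,x)-p_H^\Psi(0,x))d\tau$ — which equals $\tfrac{\Psi(\lambda)}{\lambda}\overline{p}_H^\Psi(\lambda,x)-\tfrac{\Psi(\lambda)}{\lambda^2}p_H^\Psi(0,x)$ by the convolution rule — I obtain that the two functions
\[
t\longmapsto\int_0^t\nu_\infty(t-\tau)\bigl(p_H^\Psi(\tau,x)-p_H^\Psi(0,x)\bigr)d\tau
\qquad\text{and}\qquad
t\longmapsto\int_0^t h(\tau,x)\,d\tau
\]
have the same Laplace transform. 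To conclude they coincide I would check both are continuous: $\nu_\infty\in L^1_{loc}(0,+\infty)$ because $I(x)<+\infty$ for every $x>0$ (established just before Lemma \ref{lemma:asymI}), and on a compact interval the convolution of an $L^1$ kernel with a bounded function is continuous (continuity of translation in $L^1$); since $g(\cdot,x)\in L^\infty$ and $p_H^\Psi(\cdot,x)$ is bounded by Lemma \ref{lemma:contr}, both $h(\cdot,x)$ and the left-hand convolution, hence their antiderivatives, are continuous. Uniqueness of the Laplace transform then gives $\int_0^t\nu_\infty(t-\tau)(p_H^\Psi(\tau,x)-p_H^\Psi(0,x))d\tau=\int_0^t h(\tau,x)\,d\tau$ for all $t>0$ and $x\in\mathbf{R}^*$.

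Finally I would differentiate. The right-hand antiderivative is $C^1$ in $t$, its integrand $h(\cdot,x)$ being continuous; hence so is the left-hand side, which shows $p_H^\Psi(\cdot,x)\in\mathcal{D}(\partial_t^\Psi)$ for every $x\in\mathbf{R}^*$, and differentiating gives
\[
\partial_t^\Psi p_H^\Psi(t,x)=\der{}{t}\int_0^t\nu_\infty(t-\tau)\bigl(p_H^\Psi(\tau,x)-p_H^\Psi(0,x)\bigr)d\tau=h(t,x)=\tfrac12\,Gp_H^\Psi(t,x)
\]
for every $t>0$ (a fortiori for a.e.\ $t$), so \eqref{eq:genFP} holds on $(0,+\infty)\times\mathbf{R}^*$ and $p_H^\Psi$ is a classical solution. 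The step requiring care, as opposed to mere bookkeeping of the Laplace identities already available, is precisely this regularity argument: transferring the Widder-space inverse $g$ and the singular kernel $\nu_\infty$ into genuinely \emph{continuous} convolutions, so that the Laplace-uniqueness conclusion is valid pointwise and may then be differentiated — this is where the integrability $I(x)<+\infty$ (i.e.\ $\nu_\infty\in L^1_{loc}$) and the boundedness of $p_H^\Psi$ from Lemma \ref{lemma:contr} are essential.
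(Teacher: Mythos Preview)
Your proof is correct and follows essentially the same route as the paper's: invoke the Widder isomorphism to obtain an $L^\infty$ inverse $g$, deduce $p_H^\Psi\in\mathcal{D}(G,\mathbf{R}^*)$, divide the mild identity \eqref{eq:genLFP} by $\lambda$, match Laplace transforms to identify $\nu_\infty\ast p_H^\Psi(\cdot,x)$ with the antiderivative of $\tfrac12 Gp_H^\Psi(\cdot,x)$, and differentiate. The only substantive variation is in the final regularity step: the paper bounds $|Gp_H^\Psi(t,x)|\le C_H(x)I(t)\le C t$ via Lemma~\ref{lemma:asymI}\,$(i')$ and concludes absolute continuity (hence a.e.\ differentiability), whereas you observe directly that $\nu_\infty\ast g$ is continuous (being the convolution of an $L^1_{loc}$ kernel with an $L^\infty$ function), so the antiderivative is $C^1$ and the identity holds for \emph{every} $t>0$ --- a marginally cleaner conclusion, but the argument is otherwise the same.
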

\begin{proof}
	First of all, let us show that $p_H^\Psi$ belongs to $\mathcal{D}(G,\mathbf{R}^*)$. To do this, let us just observe that since $\pdsup{}{x}{2}\hat{L}\overline{p}_H^\Psi(\cdot,x)$ belongs to the Widder space $C_W(0,+\infty)$, then $\pdsup{}{x}{2}\hat{L}\overline{p}_H^\Psi(\cdot,x)$ is the Laplace transform of some $L^\infty(0,+\infty)$ function. Moreover, since $\nu_\infty$ is in $L^1_{loc}(0,+\infty)$, the convolution product in $G$ is well defined. Hence $p_H^\Psi$ belongs to $\mathcal{D}(G,\mathbf{R}^*)$. Moreover, for any $x \in \mathbf{R}^*$ there exists a constant $C_H(x)$ such that
	\begin{equation*}
	\left|\cL^{-1}_{\lambda \to t}\left[\pdsup{}{x}{2}\hat{L}\overline{p}_H^\Psi(\cdot,x)\right](t)\right|\le C_H(x).
	\end{equation*}
	Now, let us recall, by property $i'$ of the previous lemma, that there exists a constant $C_1$ such that $I(t)\le C_1t$ for any $t \ge 1$. Hence we have, for $t \ge 1$
	\begin{equation*}
	|Gp_H^\Psi(t,x)|\le I(t)C_H(x)\le C_1C_H(x)t
	\end{equation*}
	and in particular there exists a constant $C_2>0$ such that
	\begin{equation*}
	\left|\int_0^tGp_H^\Psi(s,x)ds\right|\le C_2C_H(x)t^2.
	\end{equation*}
	We have then that the function $t \mapsto \int_0^t Gp_H^\Psi(s,x)ds$ is Laplace transformable and the Laplace transform is given by (since $Gp_H^\Psi$ is by definition Laplace transformable)
	\begin{equation*}
	\cL\left[\int_0^\cdot Gp_H^\Psi(s,x)ds\right](\lambda)=\frac{1}{\lambda}\cL[Gp_H^\Psi(\cdot,x)](\lambda)=\frac{\Psi(\lambda)}{\lambda^2}\pdsup{}{x}{2}\hat{L}\overline{p}_H^\Psi(\lambda,x).
	\end{equation*}
	Now, since $p_H^\Psi(t,x)$ is mild solution of \eqref{eq:genFP}, we have that (by multiplying \eqref{eq:genLFP} by $\frac{1}{\lambda}$ and recalling that $p_H^\Psi(0,x)=0$ for any $x \in \mathbf{R}^*$)
	\begin{equation*}
	\frac{\Psi(\lambda)}{\lambda}\overline{p}_H^\Psi(\lambda,x)=\frac{\Psi(\lambda)}{2\lambda^2}\pdsup{}{x}{2}\hat{L}\overline{p}_H^\Psi(\lambda,x)
	\end{equation*}
	thus, by bijectivity of the Laplace transform, we have
	\begin{equation*}
	(\nu_\infty \ast p_H^\Psi(\cdot,x))(t)=\frac{1}{2}\int_0^t Gp_H^\Psi(s,x)ds.
	\end{equation*}
	Finally, since we have that $|Gp_H^\Psi(t,x)|\le C_1 C_H(x)t$, it is a $L^1_{loc}(0,+\infty)$ function, hence $(\nu_\infty \ast p_H^\Psi(\cdot,x))(t)$ is an absolutely continuous function and we can take the derivative almost everywhere, concluding the proof.
\end{proof}
   \vspace*{-12pt}
\subsection{A revisited generalized Fokker-Planck equation}\label{revisited}
However, asking for $\pdsup{}{x}{2}\hat{L}\overline{p}_H^\Psi(\cdot,x)$ to be in $C_W(0,+\infty)$ could be quite a strong assumption. To overcome such problem, let us introduce another operator. Consider a function $v:(0,+\infty)\times I \mapsto \mathbf{R}$ for some $I \subseteq \mathbf{R}$ such that the Laplace transform $\overline{v}(\lambda,x)=\cL[v(\cdot,x)](\lambda)$ is well defined for $\lambda \in \mathbf{H}^*$. Suppose moreover that $\overline{v}(\lambda,x) \in \mathcal{D}(\hat{L},I)$ and $\pdsup{}{x}{2}\hat{L}\overline{v}(\lambda,x)$ is well defined for any $\lambda \in \mathbf{H}^*$ and $x \in I$. Finally, suppose that the function $\lambda \in \mathbf{H}^* \mapsto \frac{\Psi(\lambda)}{\lambda}\pdsup{}{x}{2}\hat{L}\overline{v}(\lambda,x)$ is the Laplace transform of some function. Then we can define on such function the operator
\begin{equation*}
\cG v(t,x)=\cL^{-1}_{\lambda \to t}\left[\frac{\Psi(\lambda)}{\lambda}\pdsup{}{x}{2}\hat{L}\overline{v}(\lambda,x)\right](t);
\end{equation*}
let us denote with $\mathcal{D}(\cG,I)$ the domain of such operator.\par
Let us observe that by definition $\mathcal{D}(G,I)\subseteq \mathcal{D}(\cG,I)$ and if $v \in \mathcal{D}(G,I)$ then
\begin{equation*}
\cG v(t,x)=Gv(t,x).
\end{equation*}
We can now investigate the following equation
\begin{equation}\label{eq:genFP2}
\partial_t^\Psi v(t,x)=\frac{1}{2}\cG v(t,x) \ (t,x)\in (0,+\infty)\times I.
\end{equation}
Concerning the definition of solution, mild solutions of \eqref{eq:genFP} and \eqref{eq:genFP2} coincide. However, we need to give a different definition of classical solution.
\begin{defn}
	We say that $v:(0,+\infty)\times I \to \mathbf{R}$ with $I \subseteq \mathbf{R}$ is a \textit{classical solution} of Equation \eqref{eq:genFP2} if
	\begin{itemize}
		\item $v \in \mathcal{D}(\cG,I)$;
		\item $v(\cdot,x) \in \mathcal{D}(\partial_t^\Psi)$ for any $x \in I$;
		\item for any $x \in \mathbf{R}^*$ the identity \eqref{eq:genFP2} holds for almost any $t \in (0,+\infty)$.
	\end{itemize}
\end{defn}
Obviously, classical solutions of \eqref{eq:genFP} are also classical solutions of \eqref{eq:genFP2}, but the vice-versa is not true.\par
Now, we want to show that $p_H^\Psi(t,x)$ is classical solution of \eqref{eq:genFP2}. To do this, we will need the following lemma.
\begin{lem}
	Suppose that $p^\Psi_H(t,x)$ belongs to $\mathcal{D}(\partial_t^\Psi)$. Then $\partial_t^\Psi p^\Psi_H(t,x)$ is Laplace transformable.
\end{lem}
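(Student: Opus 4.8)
The plan is to reduce the assertion to a linear growth bound on the convolution $\nu_\infty\ast p_H^\Psi(\cdot,x)$ followed by an integration by parts. Fix $x\in\mathbf{R}^*$. Since $E(0)=0$ almost surely we have $p_H^\Psi(0,x)=p_H(0,x)=0$, so by the very definition of the generalized Caputo derivative
\[
\partial_t^\Psi p_H^\Psi(t,x)=\der{}{t}\int_0^t\nu_\infty(t-\tau)p_H^\Psi(\tau,x)\,d\tau=\der{}{t}g(t,x),\qquad g(t,x):=\bigl(\nu_\infty\ast p_H^\Psi(\cdot,x)\bigr)(t).
\]
The hypothesis $p_H^\Psi(\cdot,x)\in\mathcal{D}(\partial_t^\Psi)$ entails in particular that $g(\cdot,x)$ is locally absolutely continuous, so that $\partial_t^\Psi p_H^\Psi(t,x)=\pd{g}{t}(t,x)$ for almost every $t$ and $g(t,x)=\int_0^t\pd{g}{s}(s,x)\,ds$.

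First I would bound $g$. By Lemma \ref{lemma:contr} there is a constant $C_H(x)$ with $p_H^\Psi(\tau,x)\le C_H(x)$ for all $\tau>0$, so
\[
|g(t,x)|\le C_H(x)\int_0^t\nu_\infty(s)\,ds=C_H(x)I(t).
\]
By Lemma \ref{lemma:asymI}, property $i'$, we have $I(t)/t\to 0$ as $t\to+\infty$, while $I$ is continuous on $[0,+\infty)$ with $I(0)=0$ by property $ii$; therefore $I(t)\le C_1(1+t)$ on all of $[0,+\infty)$, and consequently $|g(t,x)|\le C_H(x)C_1(1+t)$. In particular $g(\cdot,x)$ is itself Laplace transformable and $g(0,x)=0$.

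To conclude, fix $\lambda\in\mathbf{H}^*$ and $T>0$. Since $g(\cdot,x)$ is locally absolutely continuous, integration by parts gives
\[
\int_0^T e^{-\lambda t}\,\partial_t^\Psi p_H^\Psi(t,x)\,dt=e^{-\lambda T}g(T,x)-g(0,x)+\lambda\int_0^T e^{-\lambda t}g(t,x)\,dt.
\]
Letting $T\to+\infty$, the first term vanishes because $|g(T,x)|$ grows at most linearly while $\Re(\lambda)>0$, the second term is zero, and the last term converges absolutely since $|e^{-\lambda t}g(t,x)|\le C_H(x)C_1(1+t)e^{-\Re(\lambda)t}\in L^1(0,+\infty)$. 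Hence the improper integral $\int_0^{+\infty}e^{-\lambda t}\partial_t^\Psi p_H^\Psi(t,x)\,dt$ exists for every $\lambda\in\mathbf{H}^*$, i.e.\ $\partial_t^\Psi p_H^\Psi(\cdot,x)$ is Laplace transformable, and one moreover obtains $\cL[\partial_t^\Psi p_H^\Psi(\cdot,x)](\lambda)=\lambda\,\cL[g(\cdot,x)](\lambda)=\Psi(\lambda)\overline{p}_H^\Psi(\lambda,x)$, using $\cL[\nu_\infty(\cdot)](\lambda)=\Psi(\lambda)/\lambda$. The only point requiring care is the interpretation of membership in $\mathcal{D}(\partial_t^\Psi)$: the argument rests on it forcing $g(\cdot,x)=\nu_\infty\ast p_H^\Psi(\cdot,x)$ to be locally absolutely continuous, so that the fundamental theorem of calculus and the integration by parts above are legitimate; once that is granted, everything else is the soft growth estimate coming from Lemmas \ref{lemma:contr} and \ref{lemma:asymI}.
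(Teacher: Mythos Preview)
Your argument is correct and follows essentially the same route as the paper: bound the convolution $g=\nu_\infty\ast p_H^\Psi(\cdot,x)$ via Lemma~\ref{lemma:contr} and Lemma~\ref{lemma:asymI}, then integrate by parts. The paper works on $[\varepsilon,M]$ and uses property $(i)$ of Lemma~\ref{lemma:asymI} to kill the upper boundary term, and Fubini to show finiteness of the remaining integral; you work on $[0,T]$, use property $(i')$ to get a global linear bound $I(t)\le C_1(1+t)$, and dominate the integrand directly. These are cosmetic variations of the same proof.
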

\begin{proof}
	Let us first consider $0<\varepsilon<M$, $\lambda \in \mathbf{H}^*$ and $x \in \mathbf{R}^*$. Without loss of generality, we can consider $\lambda \in \mathbf{R}$ with $\lambda>0$. We have, since $p_H^\Psi(0,x)=0$,
	\begin{multline}\label{eq:pass1class}
	\int_\varepsilon^Me^{-\lambda t}\der{}{t}\left(\int_0^t\nu_\infty(\tau)p_H^\Psi(t-\tau,x)d\tau\right)dt=\lambda(I_1(\varepsilon) -I_2(M))\\+\lambda\int_\varepsilon^{M}e^{-\lambda t}\left(\int_0^t\nu_\infty(\tau)p_H^\Psi(t-\tau,x)d\tau\right)dt,
	\end{multline}
	where
	\begin{align*}
	I_1(\varepsilon)&:=e^{-\lambda \varepsilon} \int_0^\varepsilon\nu_\infty(\tau)p_H^\Psi(\varepsilon-\tau,x)d\tau, \\ I_2(M)&:=e^{-\lambda M}\int_0^M \nu_\infty(\tau)p_H^\Psi(M-\tau,x)d\tau.
	\end{align*}
	For $I_1(\varepsilon)$, we have from Lemma \ref{lemma:contr}
	\begin{equation*}
	0\le I_1(\varepsilon)\le C_H(x)I(\varepsilon)
	\end{equation*}
	and then, taking the limit as $\varepsilon \to 0$ and using property $ii$ of Lemma \ref{lemma:asymI} we have $\lim_{\varepsilon \to 0}I_1(\varepsilon)=0$.\par
	For $I_2(M)$, we have from Lemma \ref{lemma:contr}
	\begin{equation*}
	I_2(M)\le C_H(x)e^{-\lambda M}I(M)
	\end{equation*}
	and then, taking the limit as $M \to +\infty$ and using property $i$ of Lemma \ref{lemma:asymI} we have $\lim_{M \to +\infty}I_2(M)=0$.\par
	Now, taking the limit for $\varepsilon \to 0$ and $M \to +\infty$ in Equation \eqref{eq:pass1class} we have
	\begin{align*}
	\int_0^{+\infty} e^{-\lambda t}\der{}{t}&\left(\int_0^t\nu_\infty(\tau)p_H^\Psi(t-\tau,x)d\tau\right)dt\\&=\lambda \int_0^{+\infty}e^{-\lambda t}\left(\int_0^t\nu_\infty(\tau)p_H^\Psi(t-\tau,x)d\tau\right)dt.
	\end{align*}
	Now let us consider the function
	\begin{equation*}
	I_3(\tau)=\lambda\int_{\tau}^{+\infty}e^{-\lambda t}\nu_\infty(\tau)p_H^\Psi(t-\tau,x)dt.
	\end{equation*}
	By Fubini's theorem, it is enough to show that $I_3(\tau)\in L^1((0,+\infty),d\tau)$ to conclude the proof. To do that, let us observe, by Lemma \ref{lemma:contr}, that
	\begin{equation*}
	I_3(\tau)\le C_H(x)\nu_\infty(\tau)\int_{\tau}^{+\infty}\lambda e^{-\lambda t}dt=C_H(x)\nu_\infty(\tau)e^{-\lambda \tau}
	\end{equation*}
	and
	\begin{equation*}
	\int_{0}^{+\infty}I_3(\tau)d\tau\le C_H(x)\int_0^{+\infty}\nu_{\infty}(\tau)e^{-\lambda \tau}d\tau=\frac{\Psi(\lambda)}{\lambda}
	\end{equation*}
	concluding the proof.
\end{proof}
Now we can prove the following result.
\begin{thm}
	If $p_H^\Psi(t,x) \in \mathcal{D}(\partial_t^\Psi)$ then $p_H^\Psi(t,x)$ is classical solution of \eqref{eq:genFP2} in $(0,+\infty)\times \mathbf{R}^*$.
\end{thm}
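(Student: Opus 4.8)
The plan is to reduce everything to the Laplace-domain identity that was already extracted in the proof that $p_H^\Psi$ is a mild solution, and then read off the time-domain statement using the lemma immediately preceding. Recall that in establishing the mild-solution property we obtained, for $x \in \mathbf{R}^*$ and $\lambda \in \mathbf{H}^*$, the intermediate identity $\lambda\overline{p}_H^\Psi(\lambda,x) = \tfrac{1}{2}\pdsup{}{x}{2}\hat{L}\overline{p}_H^\Psi(\lambda,x)$, where we used $p_H^\Psi(0,x)=0$. Multiplying by $\Psi(\lambda)/\lambda$ this reads
\begin{equation*}
\frac{\Psi(\lambda)}{\lambda}\pdsup{}{x}{2}\hat{L}\overline{p}_H^\Psi(\lambda,x) = 2\,\Psi(\lambda)\overline{p}_H^\Psi(\lambda,x).
\end{equation*}
Moreover the structural conditions entering the definition of $\mathcal{D}(\cG,\mathbf{R}^*)$ — namely $\overline{p}_H^\Psi(\lambda,x) \in \mathcal{D}(\hat{L},\mathbf{R}^*)$ and well-definedness of $\pdsup{}{x}{2}\hat{L}\overline{p}_H^\Psi(\lambda,x)$ on $\mathbf{R}^*$ — were already verified via \eqref{eq:hatLpH} and Lemma \ref{lemma:deriv}. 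Hence the only remaining point for membership in $\mathcal{D}(\cG,\mathbf{R}^*)$ is that $\lambda \mapsto \tfrac{\Psi(\lambda)}{\lambda}\pdsup{}{x}{2}\hat{L}\overline{p}_H^\Psi(\lambda,x)$ be the Laplace transform of some function.

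Next I would identify that function. By hypothesis $p_H^\Psi(\cdot,x) \in \mathcal{D}(\partial_t^\Psi)$, so the previous lemma guarantees that $\partial_t^\Psi p_H^\Psi(\cdot,x)$ is Laplace transformable; combining this with the Laplace-transform rule for $\partial_t^\Psi$ recalled in this section and with $p_H^\Psi(0,x)=0$ for $x \in \mathbf{R}^*$, we get $\cL[\partial_t^\Psi p_H^\Psi(\cdot,x)](\lambda) = \Psi(\lambda)\overline{p}_H^\Psi(\lambda,x)$. Substituting into the display above yields
\begin{equation*}
\frac{\Psi(\lambda)}{\lambda}\pdsup{}{x}{2}\hat{L}\overline{p}_H^\Psi(\lambda,x) = 2\,\cL\!\left[\partial_t^\Psi p_H^\Psi(\cdot,x)\right](\lambda),
\end{equation*}
which shows at once that $p_H^\Psi \in \mathcal{D}(\cG,\mathbf{R}^*)$ and, by uniqueness of the Laplace transform, that $\cG p_H^\Psi(t,x) = 2\,\partial_t^\Psi p_H^\Psi(t,x)$ for almost every $t > 0$ and every $x \in \mathbf{R}^*$. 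Rearranging gives \eqref{eq:genFP2}, and together with the hypothesis $p_H^\Psi(\cdot,x)\in\mathcal{D}(\partial_t^\Psi)$ this is precisely the definition of a classical solution of \eqref{eq:genFP2} on $(0,+\infty)\times\mathbf{R}^*$.

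Essentially all of the analytic work has been front-loaded into the mild-solution theorem and into the preceding Laplace-transformability lemma, so I do not expect a serious obstacle here. The only point requiring care is purely formal: one must make sure the inverse Laplace transform occurring in the definition of $\cG p_H^\Psi$ is unambiguous, i.e.\ that the function whose transform is $\tfrac{\Psi(\lambda)}{\lambda}\pdsup{}{x}{2}\hat{L}\overline{p}_H^\Psi(\lambda,x)$ is uniquely determined up to a null set. This is exactly what the uniqueness part of the Laplace inversion theorem delivers, given that $\partial_t^\Psi p_H^\Psi(\cdot,x)$ is already known to be Laplace transformable, so the chain of equalities genuinely identifies $\cG p_H^\Psi(\cdot,x)$ with $2\partial_t^\Psi p_H^\Psi(\cdot,x)$ almost everywhere.
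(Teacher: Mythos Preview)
Your proposal is correct and follows essentially the same approach as the paper: recall the mild-solution identity, use the preceding lemma to see that $\partial_t^\Psi p_H^\Psi(\cdot,x)$ is Laplace transformable with transform $\Psi(\lambda)\overline{p}_H^\Psi(\lambda,x)$, conclude that $\tfrac{\Psi(\lambda)}{\lambda}\pdsup{}{x}{2}\hat{L}\overline{p}_H^\Psi(\lambda,x)$ is a Laplace transform (so $p_H^\Psi\in\mathcal{D}(\cG,\mathbf{R}^*)$), and invert. Your write-up is in fact a bit more explicit than the paper's about verifying the structural conditions in the definition of $\mathcal{D}(\cG,\mathbf{R}^*)$ and about invoking uniqueness of the inverse Laplace transform, but the logic is identical.
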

\begin{proof}
	Let us recall that $p_H^\Psi$ is mild solution of \eqref{eq:genFP}, hence
	\begin{equation*}
	\Psi(\lambda)\overline{p}_H^\Psi(\lambda,x)-\frac{\Psi(\lambda)}{\lambda}p_H^\Psi(0,x)=\frac{\Psi(\lambda)}{2\lambda}\pdsup{}{x}{2}\hat{L}\overline{p}^\Psi_H(\lambda,x).
	\end{equation*}
	Now, since $p_H^\Psi$ and $\partial_t^\Psi p_H^\Psi$ are Laplace transformable for $x \in \mathbf{R}^*$, we have that
	\begin{equation*}
	\cL[\partial_t^\Psi p_H^\Psi(\cdot,x)](\lambda)= \Psi(\lambda)\overline{p}_H^\Psi(\lambda,x)-\frac{\Psi(\lambda)}{\lambda}p_H^\Psi(0,x)
	\end{equation*}
	hence in particular $\frac{\Psi(\lambda)}{\lambda}\pdsup{}{x}{2}\hat{L}\overline{p}^\Psi_H(\lambda,x)$ is the Laplace transform of some function (that is $\partial_t^\Psi p_H^\Psi$ itself). Thus we have that $p^\Psi_H \in \mathcal{D}(\cG, \mathbf{R}^*)$. Finally, to obtain equation \eqref{eq:genFP2}, one just have to apply the inverse Laplace transform to both sides of \eqref{eq:genLFP}.
\end{proof}
\begin{rmk}
	Recall that (\cite{schilling2012bernstein}) a Bernstein function $\Psi$ is said to be special if the conjugate function $\Psi^*(\lambda)=\frac{\lambda}{\Psi(\lambda)}$ is still a Bernstein function. In such case, denoting with $\nu^*$ the L\'evy measure associated with $\Psi^*$ and with $\nu^*_\infty(t)=\nu^*(t,+\infty)$, we can defined the operator
	\begin{equation*}
	\mathfrak{I}^{\Psi^*}u(t)=(u \ast \nu^*_\infty)(t).
	\end{equation*}
	For $u$ sufficiently smooth, as stated in \cite{meerschaert2018relaxation}, the operator $\mathfrak{I}^{\Psi^*}$ acts as the inverse of $\partial^\Psi_t$, in particular
	\begin{equation*}
	\mathfrak{I}^{\Psi^*}\partial^\Psi_t u(t)=u(t)-u(0).
	\end{equation*}
	For this reason, for $x \in \mathbf{R}^*$, if $p^\Psi_H(t,x)$ belongs to $\mathcal{D}(\partial_t^\Psi)$, then $p^\Psi_H(t,x)$ is also solution of the integral equation
	\begin{equation*}
	p_H^\Psi(t,x)=\frac{1}{2}\mathfrak{I}^{\Psi^*}\mathcal{G}p_H^\Psi(t,x), \ (t,x)\in (0,+\infty)\times \mathbf{R}^*.
	\end{equation*}
	However, taking the Laplace transform of the right-hand side we have
	\begin{align*}
	\cL\left[\mathfrak{I}^{\Psi^*}\mathcal{G}p_H^\Psi(\cdot,x)\right](\lambda)&=\frac{1}{\lambda}\pdsup{}{x}{2}\widehat{L}\overline{p}_H^\Psi(\lambda,x).
	\end{align*}
	Defining the operator
	\begin{equation*}
	\mathfrak{G}u(t,x)=\cL_{\lambda \to t}^{-1}\left[\frac{1}{\lambda}\pdsup{}{x}{2}\widehat{L}\overline{u}(\lambda,x)\right]
	\end{equation*}
	where $\overline{u}(\lambda,x)=\cL[u(\cdot,x)](\lambda)$ (denoting its domain as $\mathcal{D}(\mathfrak{G})$) we have then, by taking the inverse Laplace transform, that $p_H^\Psi$ is solution of the following equation
	\begin{equation*}
	p_H^\Psi(t,x)=\frac{1}{2}\mathfrak{G}p_H^\Psi(t,x), \ (t,x)\in (0,+\infty)\times \mathbf{R}^*.
	\end{equation*}
	Moreover, if $\pdsup{}{x}{2}\widehat{L}\overline{p}_H^\Psi(\lambda,x)$ belongs to $C_W(0,+\infty)$, then
	\begin{equation*}
	\mathfrak{G}p_H^\Psi(t,x)=\int_0^{t}\cL^{-1}_{\lambda \to \tau}\left[\pdsup{}{x}{2}\widehat{L}\overline{p}_H^\Psi(\lambda,x)\right](\tau)d\tau
	\end{equation*}
	and then $p_H^\Psi$ is solution of the equation
	\begin{equation*}
	p_H^\Psi(t,x)=\frac{1}{2}\int_0^{t}\cL^{-1}_{\lambda \to \tau}\left[\pdsup{}{x}{2}\widehat{L}\overline{p}_H^\Psi(\lambda,x)\right](\tau)d\tau, \ (t,x)\in (0,+\infty)\times \mathbf{R}^*.
	\end{equation*}
	In particular, $p_H^\Psi$ is an absolutely continuous function and it solves, for any $x \in \mathbf{R}^*$ and almost every $t>0$, the equation
	\begin{equation*}
	\der{}{t}p_H^\Psi(t,x)=\frac{1}{2}\cL^{-1}_{\lambda \to t}\left[\pdsup{}{x}{2}\widehat{L}\overline{p}_H^\Psi(\lambda,x)\right](t).
	\end{equation*}
\end{rmk}

%%%%%%%%%%%%%%%%%%%%%%%%%%%%%%%%%%%%%%%%%%%%%%%%%
\section*{Acknowledgements}
 We are thankful to the referees for their fruitful suggestions.    The second author was partially supported by the project  STORM: Stochastics for Time-Space Risk Models, funded by the University of Oslo and the Research Council of Norway within the ToppForsk call, number 274410. The first and third author are partially supported by MIUR - PRIN 2017, project "Stochastic Models for Complex Systems", no. 2017JFFHSH.

%%%%%%%%%% References %%%%%%%%%%%%%%%%%%%%%%%%%%%%%%%%
%%%% arranged in ALPHABETIC ORDER of Authors' Families
%%%% for articles, insert also DOI numbers if available, specailly for papers in FCAA sinec 2014 !

  %%%%%%%%%%%%%%%%%%%%%%%%%%%%%%%%
\end{document}